\documentclass{amsart}
\usepackage{amsmath, amssymb,epic,graphicx,mathrsfs,enumerate}
\usepackage[all]{xy}

\usepackage{amsthm}
\usepackage{amssymb}
\usepackage{latexsym}
\usepackage{longtable}
\usepackage{epsfig}
\usepackage{amsmath}
\usepackage{hhline}


 \DeclareMathOperator{\perm}{Sym}

\DeclareMathOperator{\B}{B}

\DeclareMathOperator{\ran}{rank} \DeclareMathOperator{\frat}{Frat}
\DeclareMathOperator{\ssl}{SL} 

  \DeclareMathOperator{\diam}{diam}

\DeclareMathOperator{\GL}{GL}

\DeclareMathOperator{\End}{End} 
\DeclareMathOperator{\der}{Der}

\newcommand{\C}{\mathbb C}

\renewcommand{\emptyset}{\varnothing}

\newtheorem{thm}{Theorem}
\newtheorem{cor}[thm]{Corollary}
 \newtheorem{lemma}[thm]{Lemma}
\newtheorem{prop}[thm]{Proposition} \newtheorem{rem}[thm]{Remark}

\numberwithin{equation}{section}

\renewcommand{\footnote}{\endnote}
\newcommand{\ignore}[1]{}\makeglossary

\begin{document}
	\bibliographystyle{amsplain}
	\subjclass{20P05, 20D10, 20E18}
	\title[Generating graph]{The diameter of the generating graph\\ of a finite soluble group}

	\author{Andrea Lucchini}

	\begin{abstract} Let $G$ be a finite 2-generated soluble group and suppose that
	$\langle a_1,b_1\rangle=\langle a_2,b_2\rangle=G$. If either $G^\prime$ is of odd order or $G^\prime$ is nilpotent, then there exists $b \in G$ with  $\langle a_1,b\rangle=\langle a_2,b\rangle=G.$ We construct a soluble 2-generated group $G$ of order $2^{10}\cdot 3^2$ for which the previous result does not hold. However a weaker result is true for every finite soluble group: if 	$\langle a_1,b_1\rangle=\langle a_2,b_2\rangle=G$, then there exist $c_1, c_2$ such that $\langle a_1, c_1\rangle = \langle c_1, c_2\rangle =
	\langle c_2, a_2\rangle=G.$
	\end{abstract}
	\maketitle
	
	\section{Introduction}
	
Let $G$ be a finite group. The generating graph for $G,$ written $\Gamma(G),$ is the graph where the vertices are the nonidentity elements of $G$ and there is an edge between $g_1$ and $g_2$ if $G$ is generated by $g_1$ and $g_2.$ If $G$ is not 2-generated, then there will be no edges in this graph. Thus, it is natural to assume that $G$ is 2-generated when looking at this graph. There could be many isolated vertices in this graph. For example, all of the elements in the Frattini subgroup will be isolated vertices. We can also find isolated vertices outside the Frattini subgroup (for example the nontrivial elements of the Klein subgroup are isolated vertices in  $\Gamma(\perm(4)).$ Let $\Delta(G)$ be the subgraph of $\Gamma(G)$ that is induced by all of the vertices that are not isolated. In \cite{CLis} it is  proved that if $G$ is a 2-generated soluble group, then $\Delta(G)$ is connected. In this paper we investigate the diameter $\diam(\Delta(G))$ of this graph.  

\begin{thm}\label{diquattro}
If $G$ is a 2-generated finite soluble group, then  $\Delta(G)$ is connected and $\diam(\Delta(G)) \leq 3.$
\end{thm}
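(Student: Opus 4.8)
The plan is to argue by induction on $|G|$, proving the equivalent statement of the abstract: given $\langle a_1,b_1\rangle=\langle a_2,b_2\rangle=G$, produce $c_1,c_2$ with $\langle a_1,c_1\rangle=\langle c_1,c_2\rangle=\langle c_2,a_2\rangle=G$. Connectivity of $\Delta(G)$ is established in \cite{CLis}, and the case of cyclic $G$ is straightforward, so the work is in realising a path of length $\le 3$ between two arbitrary non-isolated vertices $a_1,a_2$.

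The first step is to reduce to $\frat(G)=1$. Since $\langle x,y\rangle=G$ exactly when $\langle x,y\rangle\,\frat(G)=G$, every edge of $\Delta(G/\frat(G))$ lifts to an edge of $\Delta(G)$ for \emph{any} choice of lifts of its two endpoints; consequently a path of $\Delta(G/\frat(G))$ joining the images of $a_1,a_2$ (which are again non-isolated, or coincide, in which case a direct argument already gives distance $\le 2$) lifts to a path of $\Delta(G)$ of the same length between $a_1$ and $a_2$. So if $\frat(G)\ne 1$ we conclude by induction applied to $G/\frat(G)$; assume henceforth $\frat(G)=1$, and fix a minimal normal subgroup $N$, which is elementary abelian of some prime characteristic $p$ and complemented: $G=N\rtimes H$.

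Now apply the inductive hypothesis to $G/N$: the images $\bar a_1,\bar a_2$ are joined in $\Delta(G/N)$ by a path $\bar a_1,\bar x,\bar y,\bar a_2$ of length $\le 3$, and the problem reduces to lifting this path to $\Delta(G)$ with the same endpoints. Here lies the crux: because $N$ is not contained in $\frat(G)$, generation is \emph{not} detected modulo $N$, so one must choose perturbations $c_1=xn_1$ and $c_2=yn_2$ with $n_1,n_2\in N$ so that $\langle a_1,xn_1\rangle$, $\langle xn_1,yn_2\rangle$ and $\langle yn_2,a_2\rangle$ are \emph{simultaneously} equal to $G$. The tool for this is a Gasch\"utz-type counting lemma: whenever $\langle\bar a,\bar c\rangle=G/N$, the set $\{n\in N:\langle a,cn\rangle\ne G\}$ is empty or a union of cosets of a proper subgroup of $N$, of size at most $|N|/p$ except in a short list of exceptional configurations (controlled by $H^1(G/C_G(N),N)$ and by whether $\langle a,c\rangle$ centralises $N$), which are treated separately. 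A double count over the pairs $(n_1,n_2)$ then delivers an admissible choice whenever $p$ is large enough.

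The main obstacle I anticipate is the small-characteristic regime $p\in\{2,3\}$, where the three families of forbidden cosets can between them cover all of $N^2$ — and this is no accident, since $\diam\le 2$ genuinely fails here, as witnessed by the group of order $2^{10}\cdot3^2$ referred to in the abstract. My plan to handle it is to invoke the crown structure of finite soluble groups: quotienting by $R_G(N)$ reduces to the case where $G$ is a crown-based power $L_k=N^k\rtimes H$ with $L=N\rtimes(G/C_G(N))$ monolithic. For $k\ge 2$ the larger module $N^k$ supplies enough slack for the double count to go through even when $p\in\{2,3\}$; for $k=1$, the primitive case with $H$ acting faithfully and irreducibly on $N$, I would instead analyse $\Delta(G)$ by hand — determine which $n\in N\setminus\{1\}$ and which cosets $Nh$ carry non-isolated vertices, and join any two such through a well-chosen element, spending the third edge precisely to circumvent the ``parallel hyperplane'' obstruction that blocks a length-$2$ connection. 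Reassembling all the cases yields the required path of length $\le 3$ and closes the induction.
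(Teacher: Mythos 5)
Your skeleton matches the paper's: induct on $|G|$, kill the Frattini subgroup, take a length-$3$ path modulo a normal section, and recover a path in $G$ by perturbing only the two middle vertices. The Frattini reduction, the padding to a path of length exactly $3$ (so that both perturbed vertices are interior), and the counting argument when $q=|\End_G(N)|\ge 3$ (choose $n_1$ avoiding one bad set of density $1/q$, then $n_2$ avoiding two) are all sound and are essentially what the paper does. But the entire difficulty of the theorem is concentrated in the case $\End_G(N)=\mathbb F_2$, and there your proposal does not contain a proof. Two concrete problems. First, the claim that for a crown-based power with $k\ge 2$ the larger module ``supplies enough slack for the double count to go through'' is unsubstantiated and, as far as I can see, false: by the computation in Corollary \ref{dueterzi}, $P_{G,N}(\{a\},1)=1-2^{a-n}$ with $a=\dim_{\mathbb F_2}\der_{\{a\}}(H,N)$, and $a=n-1$ does occur, so each of the two forbidden sets attached to a middle vertex can have size exactly $|N|/2$ and the two of them can cover $N$; no counting argument, for any crown rank, closes this case. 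Second, the residual case is deferred to an unexecuted ``by hand'' analysis of $\Delta(N\rtimes H)$ for an arbitrary irreducible soluble $H\le \GL(n,2)$; that is an infinite family, not a finite check, and no method is offered for it.

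At exactly this point the paper stops being probabilistic. It inducts modulo $U$, where $C=R\times U$ is a crown supplied by Lemma \ref{corona}, verifies the three generation conditions modulo $R$ inside the concrete group $A^{\delta}\rtimes K$ via the matrix criterion of Proposition \ref{richiami}, and glues the two verifications with Lemma \ref{sotto}; your sketch has no analogue of this gluing step, and without it the crown quotient tells you nothing about generation in $G$ itself. The condition modulo $R$ becomes: find $B_1,B_2$ making three $2n\times 2n$ determinants nonzero. Over $\mathbb F_2$ this is Lemma \ref{que}, whose conclusion genuinely fails without the extra hypothesis $(\det B_1,\det B_2)\neq (0,0)$ when $\det A\neq 0$ (the Remark following it, and the order-$2^{10}\cdot 3^2$ example, exhibit precisely this failure), together with the ordering trick of the subsequent Proposition: choose $B_2$ first when $\det A_1=0$, choose $B_1$ first otherwise, so that the remaining application of Lemma \ref{que} always lands in one of its three admissible cases, using also that a nontrivial soluble $K\le\GL(n,2)$ forces $n\ge2$. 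This linear-algebra case analysis is the technical heart of the theorem and is entirely absent from your proposal; everything up to it is the easy part.
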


The situation is completely different if the solubility assumption is dropped. It is an open problem whether or not  $\Delta(G)$ is connected, but even when $\Delta(G)$ is connected, its diameter
 can be arbitrarily large. For example if $G$ is the largest 2-generated direct power of $\ssl(2,2^p)$ and $p$ is a sufficiently large odd prime, then $\Delta(G)$ is connected but $\diam(\Delta(G))\geq 2^{p-2}-1$
 (see \cite[Theorem 5.4]{CL2}).

\

For soluble groups, the bound $\diam(\Delta(G))\leq 3$ given in Theorem  \ref{diquattro} is best possible. In Section \ref{esempio} we  construct a soluble 2-generated group $G$ of order $2^{10}\cdot 3^2$ with $\diam(\Delta(G))=3.$  However we prove that 
$\diam(\Delta(G))\leq 2$ in some relevant cases.

\begin{thm}\label{main2} Suppose that a finite 2-generated soluble group $G$ has property that $|\End_G(V)|>2$ for every nontrivial irreducible $G$-module which is $G$-isomorphic to a complemented chief factor of $G$.
	Then  $\diam(\Delta(G))\leq 2$, i.e. if $\langle a_1,b_1\rangle=\langle a_2,b_2\rangle=G,$ then there exists $b \in G$ with  $\langle a_1,b\rangle=\langle a_2,b\rangle=G.$
\end{thm}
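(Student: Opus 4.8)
The plan is to prove the equivalent statement that any two vertices $a_1,a_2$ of $\Delta(G)$ have a common neighbour; writing $\Omega_a=\{x\in G:\langle a,x\rangle=G\}$ for a vertex $a$, this amounts to $\Omega_{a_1}\cap\Omega_{a_2}\neq\emptyset$. I would argue by induction on $|G|$. If $\Phi(G)\neq 1$, pick a minimal normal subgroup $N\le\Phi(G)$: a lift of any element of $\Omega_{\bar a_1}\cap\Omega_{\bar a_2}$ computed in $\bar G=G/N$ already lies in $\Omega_{a_1}\cap\Omega_{a_2}$, since $\langle a_i,b\rangle N=G$ forces $\langle a_i,b\rangle=G$; and the hypothesis on $\End_G$ passes to quotients because a complemented chief factor of $G/N$ is $G$-isomorphic to a complemented chief factor of $G$. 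So we may assume $\Phi(G)=1$ and choose a complemented minimal normal subgroup $N$; then $N$ is an irreducible $G$-module over $\FF_p$ with $q:=|\End_G(N)|\ge 3$ by hypothesis, and $G$ acts $\FF_q$-linearly on the $\FF_q$-space $N$.

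By induction $\Omega_{\bar a_1}\cap\Omega_{\bar a_2}\neq\emptyset$ in $\bar G=G/N$ (one checks the degenerate case $a_i\in N$ separately: then $\langle a_i\rangle^G=N$, so non-isolation of $a_i$ forces $G/N$ cyclic, and the statement follows from the same $N$-level count used below). Fix $\bar b$ in this set and a lift $b\in G$. For $n\in N$ we have $\langle a_i,bn\rangle N=G$ for every $i$ and every $n$, so $\langle a_i,bn\rangle$ is either $G$ or a complement of $N$ in $G$; call $n$ \emph{bad for $i$} in the latter case. The key point is to bound the number of bad $n$. When $a_i\notin C_G(N)$, computing the $N$-coordinate of $\langle a_i,bn\rangle$ relative to a fixed complement shows that the bad $n$ for $i$ all lie in a single coset of a subgroup of $C_N(a_i)$; since $a_i$ acts nontrivially and $\FF_q$-linearly on the irreducible module $N$, the fixed space $C_N(a_i)=\ker(a_i-1)$ is a proper $\FF_q$-subspace, so $|C_N(a_i)|\le |N|/q$ and there are at most $|N|/q$ bad $n$ for $i$. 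Hence if $a_1,a_2\notin C_G(N)$ there are at most $2|N|/q<|N|$ bad $n$ altogether — this strict inequality is exactly where the hypothesis $q>2$ enters — and any $n$ bad for neither $i$ completes the induction.

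The remaining case is $a_i\in C_G(N)$ for some $i$, and this is the main obstacle. Here one checks that the bad $n$ for $i$ correspond bijectively to the complements of $N$ containing $a_i$, and for an unlucky $\bar b$ this set can a priori be as large as all of $N$, so the naive one-step induction is not by itself sufficient. The remedy is either to select $\bar b$ inside $\Omega_{\bar a_1}\cap\Omega_{\bar a_2}$ more carefully, or — what I expect to be cleaner — to replace the reduction modulo $N$ by the crown-based reduction modulo $R_G(N)$: in the primitive quotient $G/R_G(N)$, whose socle $W\cong N^{\delta}$ is self-centralising, the troublesome elements $a_iR_G(N)$ either lie outside $W$, where the count of the previous paragraph applies with $W$ in place of $N$, or lie in $W$, where non-isolation pins down the $G/R_G(N)$-normal closure of $a_iR_G(N)$ and the residual count is again controlled by $q>2$. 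The genuine work of the proof is this bookkeeping around socle-centralising elements, together with checking that the hypothesis propagates through whichever reduction is used; the Frattini reduction and the generic count of the second paragraph are routine.
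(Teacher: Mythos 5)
There is a genuine gap, and it is exactly where you locate it yourself: the case $a_i\in C_G(N)$ is never actually handled, only deferred to two unexecuted remedies (``select $\bar b$ more carefully'' or ``use the crown reduction''). Moreover your diagnosis of that case is too pessimistic in one direction and misses the real mechanism in the other. The bad set can never be all of $N$: since $\langle a_i,b_i\rangle=G$ we have $d_{\{a_i\}}(G)\le 1$, and the Gasch\"utz-type lemma for generation relative to a subset (Lemma \ref{modg}) guarantees that for \emph{any} $g$ with $\langle a_i,g\rangle N=G$ some $n\in N$ makes $\langle a_i,gn\rangle=G$ --- the count of good $n$ is independent of the lift, so there is no ``unlucky $\bar b$''. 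The decisive second point, which replaces your case distinction on whether $a_i$ centralises $N$, is that the bad $n$ are in bijection with the complements of $N$ containing $a_i$, and these are parametrised by $\der_{\{a_i\}}(H,N)$, a vector space over $\mathbb F_q=\End_G(N)$. Hence the number of bad $n$ is a power of $q$; being strictly less than $|N|=q^{\dim N}$ by the previous sentence, it is at most $|N|/q$. This yields $|\Omega_i|\ge |N|(q-1)/q\ge 2|N|/3$ uniformly, with no reference to $C_N(a_i)$, and the two sets $\Omega_1,\Omega_2$ must meet. That is the paper's argument (Corollary \ref{dueterzi}); your fixed-space bound $|C_N(a_i)|\le|N|/q$ for $a_i\notin C_G(N)$ is a special case of it, but the $\mathbb F_q$-linear structure on the space of derivations is the idea your write-up is missing.

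A secondary omission: the hypothesis only constrains \emph{nontrivial} irreducible modules, so central complemented chief factors of order $2$ are allowed and your count $2|N|/q<|N|$ fails for them. The paper disposes of this by showing that if every minimal normal subgroup has order $2$ then ($\frat(G)=1$ and solubility force) $G$ is elementary abelian of order $4$, which is checked by hand; otherwise one simply picks $N$ with $|N|\ge 3$, for which $q\ge 3$ holds whether $N$ is a trivial module or not. Your degenerate case $a_i\in N$ also needs no separate treatment once the derivation count is in place.
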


	\begin{cor}\label{coruno} Let $G$ be a 2-generated finite group. If the derived subgroup of $G$  has odd order, then  $\diam(\Delta(G))\leq 2.$
	\end{cor}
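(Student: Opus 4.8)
The plan is to deduce this directly from Theorem \ref{main2} applied to $G$ itself. The first thing to check is that $G$ is soluble, so that Theorem \ref{main2} is applicable: the quotient $G/G'$ is abelian, while $G'$ is a group of odd order and hence soluble by the Feit--Thompson theorem, so $G$ is soluble. It then remains to verify the module-theoretic hypothesis of Theorem \ref{main2}, namely that $|\End_G(V)|>2$ for every nontrivial irreducible $G$-module $V$ which is $G$-isomorphic to a complemented chief factor of $G$.

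So let $V$ be such a module, and let $H/K$ be a complemented chief factor of $G$ to which $V$ is $G$-isomorphic. Since $G$ acts nontrivially on $V$, we have $[G,H]\not\leq K$. Now $K\trianglelefteq G$ and $[G,H]\trianglelefteq G$, so $[G,H]K\trianglelefteq G$ with $K\leq [G,H]K\leq H$; the chief property of $H/K$ then forces $[G,H]K=H$. In particular $H\leq G'K$, so $|V|=|H/K|$ divides $|G'K/K|$, which in turn divides $|G'|$. Hence $|V|$ is odd, and therefore $\End_G(V)$ is a finite field whose characteristic is the (odd) prime dividing $|V|$, so $|\End_G(V)|\geq 3>2$. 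Thus the hypothesis of Theorem \ref{main2} is satisfied, and we conclude that $\diam(\Delta(G))\leq 2$.

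I do not expect a genuine obstacle here, since the corollary is essentially a specialization of Theorem \ref{main2}; the only points requiring a moment's care are that solubility has to be imported from the Feit--Thompson theorem, and that the hypothesis of Theorem \ref{main2} concerns only chief factors on which $G$ acts nontrivially. The latter is exactly what makes the argument clean: a central chief factor of order $2$ would have endomorphism field of order $2$, but such factors are irrelevant, because the computation above shows that every chief factor of $G$ on which $G$ acts nontrivially is a section of $G'$ and hence of odd order.
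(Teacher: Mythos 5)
Your proof is correct and follows essentially the same route as the paper's: solubility via Feit--Thompson, then the observation that every chief factor on which $G$ acts nontrivially is a section of $G'$ and hence has odd prime characteristic, so $|\End_G(V)|\geq 3$. The only difference is that you spell out the step $[G,H]K=H$ explicitly, which the paper leaves implicit when it asserts that $|A|$ is a power of a prime dividing $|G'|$.
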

	
		\begin{cor}\label{cordue} Let $G$ be a 2-generated finite group. If the derived subgroup of $G$  is nilpotent, then  $\diam(\Delta(G))\leq 2.$ 
		\end{cor}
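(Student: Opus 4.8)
The plan is to deduce Corollary \ref{cordue} from Theorem \ref{main2}. Since $G'$ nilpotent forces $G$ soluble, Theorem \ref{main2} applies to $G$, so it suffices to verify its module-theoretic hypothesis; I would in fact prove the slightly stronger statement, with no role for the word ``complemented'': if $G'$ is nilpotent then $|\End_G(V)|>2$ for every nontrivial irreducible $G$-module $V$ that is $G$-isomorphic to a chief factor $H/K$ of $G$.

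The heart of the argument is to show that $G'$ centralises every chief factor of $G$, i.e. $G'\le C_G(H/K)$. To see this I would pass to $X:=G/K$, where $M:=H/K$ is a minimal normal subgroup of $X$ and $N:=G'K/K=X'$ is a nilpotent normal subgroup of $X$. As $M\trianglelefteq X$, it normalises $N$, and $M\cap N$, being normal in $X$ and contained in the minimal normal subgroup $M$, equals $1$ or $M$. If $M\cap N=1$ then $[M,N]\le M\cap N=1$. If $M\le N$, then $M$ is a nontrivial normal subgroup of the nilpotent group $N$, so $M\cap Z(N)\ne 1$; since $Z(N)$ is characteristic in $N\trianglelefteq X$, the subgroup $M\cap Z(N)$ is normal in $X$ and minimality forces $M\le Z(N)$, so again $[M,N]=1$. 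In either case $[H,G']\le K$, that is, $G'\le C_G(H/K)=C_G(V)$. (An alternative route: write $G'=R\times S$ with $R$ its Sylow $p$-subgroup and $S$ its Hall $p'$-subgroup, both normal in $G$, where $p=\carr V$; then $S$ centralises $V$ by a coprimality argument, while $[R,V]$ is a proper $G$-submodule of the irreducible $G$-module $V$, hence zero, so $R$ centralises $V$ too.)

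Granting this, $G$ acts on $V$ through the quotient $\bar G:=G/C_G(V)$, which is abelian because $G/G'$ is abelian and $G'\le C_G(V)$. Now the usual structure theory of a faithful irreducible module for an abelian group finishes the job: by Schur's Lemma together with Wedderburn's theorem on finite division rings, $\FF_q:=\End_G(V)$ is a finite field; $V$ is an $\FF_q$-vector space on which $\bar G$ acts $\FF_q$-linearly, faithfully and irreducibly; and the $\FF_q$-subalgebra of $\End_{\FF_q}(V)$ generated by $\bar G$ is commutative yet, by the Jacobson density theorem, must equal all of $\End_{\FF_q}(V)$, forcing $\dim_{\FF_q}V=1$ and $\bar G\hookrightarrow\GL_1(\FF_q)=\FF_q^{\times}$. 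Since $V$ is nontrivial, $\bar G\ne 1$, so $\FF_q^{\times}\ne 1$ and therefore $|\End_G(V)|=q\ge 3>2$, as required; Theorem \ref{main2} then gives $\diam(\Delta(G))\le 2$.

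I expect the only genuine obstacle to be the reduction itself --- recognising that everything comes down to ``$G'$ centralises every chief factor'' --- and, within that, handling cleanly the dichotomy of whether $M$ meets $N$ trivially or lies inside $Z(N)$; the endgame about faithful irreducible modules for abelian groups is entirely standard.
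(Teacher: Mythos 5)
Your proof is correct, and it reaches the same decisive observation as the paper --- an abelian group acting faithfully and irreducibly on $V$ forces $\dim_{\End_G(V)}V=1$, hence $|\End_G(V)|=|V|\geq 3$ for a nontrivial module --- but it gets there by a different reduction. The paper first passes to $\frat(G)=1$ and invokes the structure of a Frattini-free group with nilpotent derived subgroup: $G=M\rtimes H$ with $H$ abelian and $M$ a direct product of irreducible $H$-modules, after which each factor is absolutely irreducible over its endomorphism field and so one-dimensional. You instead prove directly that the nilpotent normal subgroup $G'$ centralises \emph{every} chief factor (your dichotomy $M\cap N=1$ versus $M\leq Z(N)$ is the standard argument showing $G'\leq\fit(G)\leq C_G(H/K)$), so that $G/C_G(V)$ is abelian, and then finish with Schur--Wedderburn and density. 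Your route avoids the Frattini reduction and the structure theorem, verifies the hypothesis of Theorem \ref{main2} for all chief factors rather than only the complemented ones, and is self-contained; the paper's version is shorter because it leans on the known decomposition of Frattini-free groups. Both are complete and correct.
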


\section{Proof of Theorem \ref{main2}}

We could prove Theorem \ref{main2} with the same approach that will be used in the proof of Theorem \ref{diquattro}. However we prefer to give in this particular case an easier and shorter proof. Before doing that,
 we briefly recall some necessary definitions and results. Given a subset
$X$ of 
a finite group $G,$ we will denote by $d_X(G)$ the smallest cardinality
of a set of elements of $G$ generating $G$ together with the elements 
of $X.$ The following 
generalizes
a result originally
obtained by W. Gasch\"utz \cite{Ga} for $X=\emptyset.$ 

\begin{lemma}[\cite{CLis} Lemma 6] \label{modg} Let $X$ be a subset of $G$ and $N$ a normal subgroup of $G$ and suppose that
	$\langle g_1,\dots,g_k, X\rangle N=G.$ 
	If $k\geq d_X(G),$ then there exist
	$n_1,\dots,n_k\in N$ so that $\langle g_1n_1,\dots,g_kn_k,X\rangle=G.$
\end{lemma}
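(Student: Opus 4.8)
This is Gasch\"utz's lemma when $X=\emptyset$, and the plan is to run the usual argument while carrying $X$ along. I would induct on $|N|$: first reduce to the case where $N$ is a minimal normal subgroup of $G$, dispose of the Frattini case at once, and then, for complemented $N$, show that the number of good corrections $(n_1,\dots,n_k)$ depends only on $G$, $N$ and $X$ — not on the particular $(g_1,\dots,g_k)$ — so that the single generating tuple guaranteed by $k\ge d_X(G)$ forces every admissible tuple to have a correction.

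\textbf{Step 1 (reduction).} If $N\ne 1$ is not minimal normal, pick a minimal normal subgroup $N_1\le N$ of $G$; apply the statement inductively in $G/N_1$ (using $d_{\ol X}(G/N_1)\le d_X(G)\le k$) to obtain $m_1,\dots,m_k\in N$ with $\langle g_1m_1,\dots,g_km_k,X\rangle N_1=G$, then apply the minimal-normal case (Step 2) to $N_1$ and the tuple $(g_1m_1,\dots,g_km_k)$, obtaining corrections $n_i'\in N_1$ and taking $n_i=m_in_i'$. This reduces to $N$ minimal normal. If moreover $N\le\Phi(G)$, then $\langle g_1,\dots,g_k,X\rangle N=G$ already gives $\langle g_1,\dots,g_k,X\rangle=G$, so $n_1=\dots=n_k=1$ works.

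\textbf{Step 2 (the counting, $N$ minimal normal and $N\not\le\Phi(G)$).} Let $\mathcal M$ be the set of maximal subgroups $M$ of $G$ with $X\subseteq M$ but $N\not\le M$ (so $MN=G$). For a tuple $\mathbf g$ with $\langle g_1,\dots,g_k,X\rangle N=G$ put $P(\mathbf g)=\#\{\mathbf n\in N^k:\langle g_1n_1,\dots,g_kn_k,X\rangle=G\}$. Since $\langle g_1n_1,\dots,g_kn_k,X\rangle N=G$ always, this subgroup is proper precisely when it lies in some $M\in\mathcal M$; inclusion--exclusion over $\mathcal A\subseteq\mathcal M$, with $D_{\mathcal A}=\bigcap_{M\in\mathcal A}M$ and $D_\emptyset=G$, gives
\[
P(\mathbf g)=\sum_{\mathcal A\subseteq\mathcal M}(-1)^{|\mathcal A|}\prod_{i=1}^k\#\{n\in N:g_in\in D_{\mathcal A}\}.
\]
For any subgroup $D$ and any $g$, the set $\{n\in N:gn\in D\}$ is empty when $g\notin DN$ and a coset of $N\cap D$ when $g\in DN$; so the $i$-th factor is $|N\cap D_{\mathcal A}|$ if $g_i\in D_{\mathcal A}N$ and $0$ otherwise. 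The crucial point is that $X\subseteq D_{\mathcal A}$, hence $\ol X\subseteq D_{\mathcal A}N/N$, so since the images of $g_1,\dots,g_k$ generate $G/N$ together with $\ol X$, the condition ``$g_i\in D_{\mathcal A}N$ for every $i$'' is equivalent to $D_{\mathcal A}N=G$, with no dependence on $\mathbf g$. Hence $P(\mathbf g)=\sum_{\mathcal A:\,D_{\mathcal A}N=G}(-1)^{|\mathcal A|}|N\cap D_{\mathcal A}|^k$, which involves only $G$, $N$, $X$. (In the soluble case $N$ is abelian, each $M\in\mathcal M$ is then a complement of $N$ in $G$, and one sees more directly that the bad tuples biject with the complements of $N$ in $G$ containing $X$ via $M\mapsto(v_1^{-1},\dots,v_k^{-1})$, where $g_i=m_iv_i$ with $m_i\in M$ and $v_i\in N$; so $P(\mathbf g)=|N|^k-\#\{\text{complements of }N\text{ in }G\text{ containing }X\}$.)

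\textbf{Step 3 (conclusion and main obstacle).} Since $k\ge d_X(G)$ there is a tuple $\mathbf h$ with $\langle h_1,\dots,h_k,X\rangle=G$, so $P(\mathbf h)\ge 1$ (take all $n_i=1$); as $\mathbf h$ and the given $\mathbf g$ both satisfy the hypothesis, Step 2 yields $P(\mathbf g)=P(\mathbf h)\ge 1$, and any $\mathbf n$ counted by $P(\mathbf g)$ is as required. The step I expect to need real care is the constancy of $P(\mathbf g)$ in Step 2, specifically the collapse of ``$g_i\in D_{\mathcal A}N$ for all $i$'' to ``$D_{\mathcal A}N=G$'' (which is exactly what removes all dependence on $\mathbf g$); the reduction in Step 1, the Frattini case, and — for abelian $N$ — the identification of bad tuples with complements are routine.
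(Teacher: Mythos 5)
Your proof is correct and follows essentially the same route as the source: the paper only cites \cite[Lemma 6]{CLis} without reproducing the argument, but its remark that the count $\phi_{G,N}(X,k)$ of good corrections is independent of $(g_1,\dots,g_k)$ is exactly the constancy you establish in Step 2 via inclusion--exclusion over the maximal subgroups containing $X$ and supplementing $N$, and your reduction to the minimal normal case and the complement count for abelian $N$ match Corollary \ref{dueterzi} as well.
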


It follows from the proof of \cite[Lemma 6]{CLis} that the number, say  $\phi_{G,N}(X,k),$ of $k$-tuples $(g_1n_1,\dots,g_kn_k)$ generating $G$ with $X$ is 
independent of the choice of $(g_1,\dots,g_k).$ In particular 
$$\phi_{G,N}(X,k)=|N|^kP_{G,N}(X,k)$$
where $P_{G,N}(X,k)$ is the conditional probability that $k$ elements of $G$ generate $G$ with $X,$ given that they generate $G$ with $XN$.

\begin{prop}[\cite{xdir} Proposition 16]\label{proba} If $N$ is a normal subgroup of a finite group $G$ and $k$ is a positive integer, then
$$P_{G,N}(X,k)=\sum_{\substack{X\subseteq H\leq G\\
			HN=G\\}}\frac{\mu(H,G)}{|G:H|^k}.$$
where
$\mu$ is the M\"{o}bius function associated with the subgroup
lattice of $G.$ 	
\end{prop}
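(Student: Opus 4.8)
The plan is to compute $\phi_{G,N}(X,k)$ directly, by fixing a base tuple and counting its lifts, rather than manipulating the unconditional generating probabilities. I fix a $k$-tuple $(g_1,\dots,g_k)$ with $\langle g_1,\dots,g_k,X\rangle N=G$; by the remark following Lemma \ref{modg} the number of lifts $(g_1n_1,\dots,g_kn_k)$ with $n_i\in N$ that generate $G$ together with $X$ is exactly $\phi_{G,N}(X,k)=|N|^kP_{G,N}(X,k)$, independently of the chosen base tuple. Hence it suffices to show that $\phi_{G,N}(X,k)=|N|^k\sum_{X\subseteq H\leq G,\ HN=G}\mu(H,G)/|G:H|^k$, after which dividing by $|N|^k$ gives the claim.

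The main step is a local count. For a subgroup $H\leq G$, let $c_H$ denote the number of tuples $(n_1,\dots,n_k)\in N^k$ with $\langle g_1n_1,\dots,g_kn_k,X\rangle\leq H$; this is zero unless $X\subseteq H$, and when $X\subseteq H$ it is just the number of tuples with $g_in_i\in H$ for every $i$. Since the choices of the $n_i$ are independent, $c_H=\prod_i|\{n\in N:g_in\in H\}|=\prod_i|N\cap g_i^{-1}H|$. I would then verify the elementary fact that $|N\cap g_i^{-1}H|$ equals $|H\cap N|$ when $g_i\in HN$ and is $0$ otherwise: writing $g_i=hn$ with $h\in H$, $n\in N$ identifies $N\cap g_i^{-1}H$ with a coset of $H\cap N$ inside $N$. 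Thus $c_H=|H\cap N|^k$ if every $g_i$ lies in $HN$, and $c_H=0$ otherwise. Finally, because $X\subseteq H$ and $\langle g_1,\dots,g_k,X\rangle N=G$, the condition that all $g_i$ lie in the (normal-closure) subgroup $HN$ is equivalent to $HN=G$: if all $g_i\in HN$ then $G=\langle g_1,\dots,g_k,X\rangle N\subseteq HN$, and the converse is immediate. So the nonzero contributions come precisely from the $H$ with $X\subseteq H$ and $HN=G$.

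The last step is Möbius inversion on the subgroup lattice. Grouping the lifts contained in $H$ according to the subgroup they generate with $X$ gives $c_H=\sum_{K\leq H}b_K$ for every $H\leq G$, where $b_K$ counts the lifts with $\langle g_1n_1,\dots,g_kn_k,X\rangle=K$; inverting yields $\phi_{G,N}(X,k)=b_G=\sum_{H\leq G}\mu(H,G)c_H$. By the previous paragraph only the subgroups with $X\subseteq H$ and $HN=G$ survive, and for such $H$ one has $|HN|=|G|$, so $|H\cap N|=|H||N|/|G|=|N|/|G:H|$ and hence $c_H=|N|^k/|G:H|^k$. Substituting gives $\phi_{G,N}(X,k)=|N|^k\sum_{X\subseteq H\leq G,\ HN=G}\mu(H,G)/|G:H|^k$, and dividing by $|N|^k$ completes the proof.

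I expect the only real obstacle to be the bookkeeping in the local count: pinning down $|N\cap g_i^{-1}H|$ correctly and confirming that the restriction $HN=G$, rather than the full range $X\subseteq H\leq G$, is exactly what survives in the sum. Everything else is a routine application of Gasch\"utz-style Möbius inversion over the subgroup lattice.
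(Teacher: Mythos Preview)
The paper does not supply a proof of this proposition; it is quoted verbatim as \cite[Proposition 16]{xdir} and used as a black box. So there is no in-paper argument to compare against.

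Your argument is correct and is essentially the standard Hall--Gasch\"utz M\"obius-inversion computation specialized to the coset $g_1N\times\cdots\times g_kN$. The two points you flagged as potential obstacles both go through cleanly: for the local count, $N\cap g_i^{-1}H$ is empty when $g_i\notin HN$ and is a coset of $H\cap N$ in $N$ when $g_i\in HN$ (since $N$ is normal, $HN$ is a subgroup and $g_i^{-1}H\cap N=n^{-1}(H\cap N)$ once $g_i=hn$); and the reduction of the summation range to $HN=G$ follows because $X\subseteq H$ together with all $g_i\in HN$ forces $G=\langle g_1,\dots,g_k,X\rangle N\leq HN$. The final substitution $|H\cap N|=|N|/|G:H|$ is just the product formula $|HN||H\cap N|=|H||N|$ with $HN=G$. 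Nothing is missing.
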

	
\begin{cor}\label{dueterzi}
Let $N$ be a minimal normal subgroup of a finite group $G$. Assume that $N$ is abelian and let $q=|\End_G(N)|.$ For every $X\subseteq G$, if $k\geq d_X(G)$ and $P_{G,N}(X,k)\neq 0,$ then  $P_{G,N}(X,k)\geq \frac{q-1}{q}.$
\end{cor}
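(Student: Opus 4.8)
The plan is to evaluate the sum in Proposition \ref{proba} by exploiting the very restricted structure of the subgroups $H$ with $X\subseteq H$ and $HN=G$ when $N$ is a \emph{minimal} normal abelian subgroup. Since $N$ is a minimal normal subgroup and $HN=G$, for any such $H$ we have $H\cap N\trianglelefteq G$ (as $N$ is abelian, $N$ normalises $H\cap N$, and $H$ normalises it too), so $H\cap N$ is either $N$ or $1$. If $H\cap N=N$ then $H=G$, contributing the term $\mu(G,G)/|G:G|^k=1$. Otherwise $H\cap N=1$, so $H$ is a complement to $N$ in $G$ and $|G:H|=|N|=:n$; moreover any two such complements containing $X$ are $N$-conjugate by the standard cohomological argument (since $N$ is abelian, complements to $N$ form a coset space under $\mathrm{H}^1$, and conjugation by $N$ acts transitively on those giving the same image subgroup $HN/N=G/N$ — here all complements have the same image since $HN=G$).

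First I would fix one complement $H_0\supseteq X$ (if none exists, then every term is the $H=G$ term, but then $P_{G,N}(X,k)=1\geq (q-1)/q$ trivially, or one argues the hypothesis $P\neq 0$ forces the right structure — I'd handle this edge case at the outset). The complements containing $X$ then correspond to a subgroup $W$ of the $\FF_q$-vector space $Z^1(G/N,N)/B^1(G/N,N)\cong \mathrm{H}^1(G/N,N)$; more precisely, the condition $X\subseteq H$ cuts out an $\End_G(N)$-submodule, hence an $\FF_q$-subspace, of the space of derivations modulo inner ones. Writing $q^d$ for the number of such complements and noting $\mu(H,G)=\mu(1,\bar G')$ is the same constant $c$ for all of them (all complements are conjugate, hence isomorphic as subgroups-with-the-lattice-above-them), Proposition \ref{proba} gives
$$P_{G,N}(X,k)=1+\frac{c\,q^d}{n^k}.$$

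The key numerical input is that $n=|N|$ is a power of $q$, say $n=q^m$ with $m\geq 1$, and that $d\leq m\cdot$ (something) — but the sharper and cleaner route is to use the hypothesis $k\geq d_X(G)$: this guarantees (via Lemma \ref{modg}, or directly) that $P_{G,N}(X,k)=\phi_{G,N}(X,k)/n^k$ with $\phi_{G,N}(X,k)$ a \emph{nonnegative integer} (it literally counts tuples), and in fact an integer multiple of $n^{k}-n^{k-1}$? Not quite — the precise fact I would invoke is that $P_{G,N}(X,k)n^k=\phi_{G,N}(X,k)$ is an integer, so $P_{G,N}(X,k)=a/q^{mk}$ for a nonnegative integer $a$; combined with the two-term formula above, $P_{G,N}(X,k)=1+cq^d/q^{mk}$, so $cq^d/q^{mk}$ is a rational number that is $\geq -1$ and such that $1+cq^d/q^{mk}\geq 0$. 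Since $P_{G,N}(X,k)\neq 0$ and $P_{G,N}(X,k)q^{mk}\in\Z$, and $q^{mk}/q^d=q^{mk-d}$ with $mk-d\geq 1$ (because $d\leq m k-1$: there are at most $q^{mk}$ tuples but the derivation space has dimension at most $m(k)$... ), we get $P_{G,N}(X,k)\geq 1 - \frac{1}{q}\cdot(\text{unit})\geq \frac{q-1}{q}$.

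The main obstacle — and the step I'd spend the most care on — is pinning down exactly why the "defect" term $c\,q^d/n^k$ has absolute value at most $1/q$ whenever $P_{G,N}(X,k)\neq 0$. The clean way is: $P_{G,N}(X,k)$ lies in $\frac{1}{q}\Z_{\geq 0}$ after one shows $n^{k}P_{G,N}(X,k)\in q^{(m-1)k}\cdot\Z$ — equivalently that $\phi_{G,N}(X,k)$ is divisible by $q^{(m-1)k}$ — which follows because the set of generating tuples modulo $N$ is acted on freely by a group of order divisible by $q^{(m-1)k}$, or more simply because $\phi_{G,N}(X,k)$ counts $\FF_q$-points of a constructible set fibering over the generating tuples of $G/N$ with fibers that are cosets of $\FF_q$-subspaces of $N^k\cong\FF_q^{mk}$ of dimension $\geq (m-1)k$ (the subspace of "adjustments killing the extra generation-with-$X$ condition" has codimension $\leq k$, one linear condition per new generator from $d_X$). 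Hence $P_{G,N}(X,k)=b/q$ with $b\in\Z_{\geq 0}$; since $b\neq 0$ and $P_{G,N}(X,k)=1+(\text{correction of size }<1)$ forces $b\in\{1,\dots\}$ with $b/q<2$, we conclude $b\geq 1$, i.e. $P_{G,N}(X,k)\geq 1/q$ — and then a slightly finer count, using that the $H=G$ term contributes exactly $1=q/q$ and the remaining terms are negative of total size $q^d/q^{mk}\leq 1/q$, upgrades this to $P_{G,N}(X,k)\geq (q-1)/q$. I would present the divisibility claim as the technical heart, deriving it from Lemma \ref{modg} and the fact that $d_X(G)$-tuples exist generating $G/N$ with $XN$.
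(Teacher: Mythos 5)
Your overall strategy is the same as the paper's: reduce the M\"obius sum of Proposition \ref{proba} to $1-(\text{number of complements of } N \text{ containing } X)/|N|^k$, observe that this number is a power of $q=|\End_G(N)|$ because the complements containing a fixed one $H_0\supseteq X$ are parametrized by an $\FF_q$-vector space of derivations, and then use the fact that $P_{G,N}(X,k)$ is a nonnegative, nonzero quantity to force that power to be at most $|N|^k/q$. That is exactly the published argument, and the key idea is present in your write-up.

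However, two steps as you state them are wrong or missing. First, it is not true in general that all complements of $N$ containing $X$ are $N$-conjugate (that would amount to the vanishing of the relevant part of $H^1$); correspondingly, the parametrizing space is the subspace $\der_X(H_0,N)$ of the full space of derivations $H_0\to N$, not a subspace of derivations modulo inner ones --- quotienting by inner derivations would count conjugacy classes of complements, not complements. Second, and more importantly, you never establish that $\mu(H,G)=-1$ for these complements: you carry an undetermined constant $c$ through the computation and then silently take $c=-1$ at the end. The conjugacy claim, even if it were true, would only give you that $\mu(H,G)$ is constant, not its value. The correct justification, which the paper uses, is that any proper supplement $H$ of the abelian minimal normal subgroup $N$ is a maximal subgroup (for $H\le L\le G$ one has $L=H(L\cap N)$ and $L\cap N$ is normalized by $H$ and by the abelian group $N$, hence is normal in $G$, hence equals $1$ or $N$), so $\mu(H,G)=-1$ and $|G:H|=|N|$. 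Finally, the closing digression on divisibility of $\phi_{G,N}(X,k)$ by $q^{(m-1)k}$ is both unjustified and unnecessary: once you have $P_{G,N}(X,k)=1-q^d/q^{mk}$, nonnegativity gives $d\le mk$, the hypothesis $P_{G,N}(X,k)\neq 0$ gives $d\neq mk$, hence $q^d/q^{mk}\le 1/q$, which is the clean finish.
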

	
\begin{proof}
We may assume that $N$ is not contained in the Frattini subgroup of $G$ (otherwise $P_{G,N}(X,k)=1$).
In this case, if $H$ is a proper supplement of $N$ in $G,$ then $H$ is a maximal subgroup of $G$ and complements $N$. Therefore $\mu(H,G)=-1$ and $|G:H|=|N|$. It follows from Proposition \ref{proba} that
$$P_{G,N}(X,k)=1-\frac{c}{|N|^k},$$
where $c$ is the number of complements of $N$ in $G$ containing $X.$ If $c=0,$ then $P_{G,N}(X,k)=1.$ Assume $c\neq 0$ and fix a complement $H$ of $N$ in $G$ containing $X.$ Let $\der_X(H,N)$ be the set of derivations $\delta$ from $H$ to $N$ with the property that $x^\delta=1$ for every $x \in X.$ The complements of $N$ in 
$G$ containing $X$ are precisely the subgroups of $G$ of the kind
$H_\delta=\{hh^\delta\mid h \in H\}$ with $\delta \in \der_X(H,N)$, hence
$$P_{G,N}(X,k)=1-\frac{|\der_X(H,N)|}{|N|^k}.$$
Now let $\mathbb F_q=\End_G(N)$. Both $N$ and $\der_X(H,N)$ can be viewed as vector spaces over $\mathbb F_q.$ Let $$n=\dim_{\mathbb F_q}N,\quad a=\dim_{\mathbb F_q}\der_X(H,N).$$ We have
$$P_{G,N}(X,k)=1-\frac{q^a}{q^{nk}}.$$
Since $P_{G,N}(X,k)\neq 0,$ we have $a<kq$ and
$$P_{G,N}(X,k)=1-\frac{q^a}{q^{nk}}\geq 1-\frac{1}{q}=\frac{q-1}{q}.\qedhere$$
\end{proof}

\begin{proof}[Proof of Theorem \ref{main2}]
	We prove the theorem by induction on the order of $G.$ We may assume that $G$ is not cyclic and that the Frattini subgroup of $G$ is trivial. We distinguish two cases:
	
\noindent a) All the minimal normal subgroups of $G$ have order 2. In this case $G$ is an elementary abelian group of order 4 and $a_1$ and $a_2$ are nontrivial elements of $G.$ If $b\notin \{1,a_1,a_2\},$ then
	$\langle a_1,b\rangle=\langle a_2,b\rangle=G.$
	
\noindent b) $G$ contains a minimal normal subgroup $N$ with $|N|\geq 3.$	By assumption $q=|\End_G(N)|\geq 3.$ Assume $\langle a_1, b_1\rangle=\langle a_2, b_2\rangle=G.$ By induction there exists $g\in G$ such that  $\langle a_1, g\rangle N=\langle a_2, g \rangle N=G.$ For $i\in\{1,2\},$ let
$$\Omega_i=\{n\in N \mid \langle a_i, gn\rangle=G\}.$$
Since $\langle a_i, b_i\rangle=G,$ we have $d_{\{a_i\}}(G)\leq 1,$ hence, by Lemma \ref{modg}, $P_{G,N}(\{a_i\},1)\neq 0$ and consequently we deduce from Corollary \ref{dueterzi} that
$$|\Omega_i|=|N|P_{G,N}(\{a_i\},1)\geq |N|\frac{q-1}{q}\geq \frac{2|N|}{3}.$$
But then $\Omega_1\cap \Omega_2\neq \emptyset.$ Let $b=gn$ with 
$n\in \Omega_1\cap \Omega_2.$ Then $G=\langle a_1, b\rangle=\langle a_2, b\rangle.$
	\end{proof}
	
\begin{proof}[Proof of Corollary \ref{coruno}] Let $G^\prime$ be the derived subgroup of $G.$
	If $|G^\prime|$ is odd, then $G^\prime$ is soluble by the Feit-Thompson Theorem, and consequently
	$G$ is also soluble. Moreover if $X$ and $Y$ are normal subgroups of $G$ such that $A=X/Y$ is a nontrivial irreducible $G$-module then  $|A|$ is a power of a prime divisor $p$ of $|G^\prime|$ and $F=\End_G(A)$ is a finite field of characteristic $p$.
	Hence $|F|\geq p\geq 3$ and we may apply Theorem \ref{main2}.
\end{proof}

\begin{proof}[Proof of Corollary \ref{cordue}]
We may assume $\frat (G)=1.$ This means that $G=M\rtimes H$ where $H$ is abelian and $M=V_1\times \cdots \times V_u$ is the direct product
	of $u$ irreducible non trivial $H$-modules $V_1,\dots,V_u.$ Let $F_i=\End_H(V_i)=\End_G(V_i)$: for each $i\in \{1,\dots,u\},$ $V_i$ is an absolutely irreducible $F_iH$-module
	so $\dim_{F_i}V_i=1.$ Now assume that $A$ is a nontrivial irreducible $G$-module $G$-isomorphic to a complemented chief factor of $G$:
	it must be $A\cong_G V_i$ for some $i$, so $|\End_G(A)|=|F_i|=|V_i|=|A|.$ It cannot be $|A|=2,$ otherwise $A$ would be a trivial $G$-module.
	Again we may apply Theorem \ref{main2}.
\end{proof}

\begin{rem}
	{\rm{Let $N$ be a noncentral and complemented minimal normal subgroup of a 2-generated soluble group $G$ and assume that $\mathbb F_2=\End_G(N).$ It follows from \cite[Lemma 18]{xdir} and \cite[Lemma 18]{bias} that if $P_{G,N}(X,k)\leq 1/2,$ then there are $\dim_{\mathbb F_2}N$ different complemented factors $G$-isomorphic to $N$ in every chief series of $G.$ This means that, with the same arguments used in the proof of Theorem \ref{main2}, a little bit stronger result can be proved:}} Suppose that a finite 2-generated soluble group $G$ has the following property: if $V$ is
			a nontrivial irreducible $G$-module with $\End_G(V)=\mathbb F_2,$ then a chief series of $G$ does not contain $\dim_{\mathbb F_2}V$ different complemented factors $G$-isomorphic to $V.$
				Then  $\diam(\Delta(G))\leq 2$.
\end{rem}

\section{A finite soluble group $G$ with $\diam(\Delta(G))>2$}\label{esempio}

Let first recall some results that we will be applied in the discussion of our example. Let $G$ be a finite soluble group, and let $\mathcal V_G$ be a set
of representatives for the irreducible $G$-groups that are
$G$-isomorphic to a complemented chief factor of $G$. For $V \in
\mathcal V_G$ let $R_G(V)$ be the smallest normal subgroup contained
in $C_G(V)$ with the property that $C_G(V)/R_G(V)$ is
$G$-isomorphic to a direct product of copies of $V$ and it has a
complement in $G/R_G(V)$. The factor group $C_G(V)/R_G(V)$ is
called the $V$-crown of $G$. The non-negative integer
$\delta_G(V)$ defined by $C_G(V)/R_G(V)\cong_G V^{\delta_G(V)}$ is
called the $V$-rank of $G$ and it coincides with the number of
complemented factors in any chief series of $G$ that are
$G$-isomorphic to $V$. If $\delta_G(V) \neq 0$, then the $V$-crown
is the socle of $G/R_G(V)$. The notion of crown was introduced by
Gasch\"utz in \cite{Ga2}. We have (see for example \cite[Proposition 2.4]{LM2}):

\begin{prop}\label{prouno}
	\label{lemma} Let $G$ and $\mathcal V_G$ be as above. Let $x_{1},
	\ldots , x_{u}$ be elements of $G$ such that $\langle
	x_1,\dots,x_u,R_G(V)\rangle=G$ for any $V \in \mathcal V_G$. Then
	$\langle x_1,\dots,x_u\rangle=G$.
\end{prop}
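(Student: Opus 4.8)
The plan is to argue by contradiction. Suppose $H:=\langle x_1,\dots,x_u\rangle\neq G$, and fix a maximal subgroup $M$ of $G$ with $H\leq M$; put $C:=\Core_G(M)$. It then suffices to exhibit some $V\in\mathcal V_G$ with $R_G(V)\leq M$, because from this one gets $\langle x_1,\dots,x_u,R_G(V)\rangle\leq M<G$, contradicting the hypothesis that this subgroup equals $G$. So everything reduces to finding the right $V$ and bounding $R_G(V)$.

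The natural candidate is the chief factor attached to the primitive quotient $G/C$. Since $M/C$ is a core-free maximal subgroup of the soluble group $G/C$, that quotient is a primitive soluble group; hence it has a \emph{unique} minimal normal subgroup $N/C$, and moreover $N/C$ is abelian, is complemented in $G/C$ by $M/C$, and is self-centralizing in the sense that $C_{G/C}(N/C)=N/C$. In particular $N/C$ is a complemented chief factor of $G$ (use a chief series of $G$ passing through $C$ and $N$), so $N/C\cong_G V$ for a unique $V\in\mathcal V_G$, and $\delta_G(V)\geq 1$.

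The key step is to check that $R_G(V)\leq C$. Lifting the self-centralizing identity to $G$ gives $C_G(V)=C_G(N/C)=N$, so $C$ is a normal subgroup of $G$ contained in $C_G(V)$; moreover $C_G(V)/C=N/C\cong_G V$ is (trivially) $G$-isomorphic to a direct product of copies of $V$, and it admits the complement $M/C$ in $G/C$. Thus $C$ satisfies the exact property used to define $R_G(V)$, so the minimality in that definition yields $R_G(V)\leq C\leq M$, which closes the argument.

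The only genuine obstacle is the structural input on primitive soluble quotients — that $G/\Core_G(M)$ has a unique, self-centralizing minimal normal subgroup complemented by $M/\Core_G(M)$ — together with the bookkeeping showing that $\Core_G(M)$ itself meets the defining condition of $R_G(V)$, so that minimality may be invoked. The remaining points are routine: that a single copy of $V$ counts as a direct product of copies of $V$ in the definition of $R_G(V)$, and that $\Core_G(M)\leq C_G(V)$, both immediate once $C_G(V)=N$ is in hand.
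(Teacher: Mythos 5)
Your argument is correct and is essentially the standard proof of this fact (the paper itself gives no proof, citing [LM2, Proposition 2.4], whose argument is exactly this): every maximal subgroup $M$ of a finite soluble group contains $R_G(V)$ for the $V$ attached to the primitive quotient $G/\Core_G(M)$, so a proper subgroup cannot supplement every $R_G(V)$. The one point you use implicitly is that $R_G(V)$, being \emph{the smallest} normal subgroup with the stated property, is contained in every normal subgroup satisfying it — but that is exactly how the paper's definition is phrased, so invoking it for $C=\Core_G(M)$ is legitimate.
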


Now let $V$ be a finite dimensional vector space over a finite field
of prime order. Let $K$ be a $d$-generated linear soluble group acting
irreducibly and faithfully on $V$ and fix a generating $d$-tuple $(k_1,\dots,k_d)$ of $K.$
For a positive integer $u$ we consider
the semidirect product $G_u = V^u \rtimes K$ where $K$ acts in the
same way on each of the $u$ direct factors. Put $F =
\mathrm{End}_{K}(V)$. Let $n$ be the dimension of $V$ over $F$. We may identify $K =
\langle k_1, \dots, k_d \rangle$ with a subgroup of the general linear group $\GL(n,F)$. In this
identification $k_i$ becomes an $n\times n$ matrix $X_i$ with coefficients in $F$; denote by $A_i$ the matrix
$I_n-X_i.$ Let $w_i=(v_{i,1},\dots,v_{i,u})\in V^u.$
Then every $v_{i,j}$ can be
viewed as a $1 \times n$ matrix. Denote the $u \times n$ matrix
with rows $v_{i,1},\dots,v_{i,u}$  by $B_i$. The following result is proved in
\cite[Section 4]{CL4}.

\begin{prop}\label{richiami} The group $G_u=V^u\rtimes K$ can be generated by $d$ elements if and only if $u\leq n(d-1).$ Moreover
	\begin{enumerate}
		\item $\mathrm{rank} \begin{pmatrix}A_1&\dots&A_d\end{pmatrix}=n.$ \item
		$\langle k_1w_1,\dots,k_dw_d \rangle=V^u \rtimes K$ if and only if
		$\mathrm{rank} \begin{pmatrix}A_1&\cdots&A_d\\
		B_1&\cdots&B_d\end{pmatrix} = n+u.$
	\end{enumerate}
	\end{prop}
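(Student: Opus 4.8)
The whole statement will be reduced to one cohomological fact together with the standard dictionary relating complements of an irreducible abelian normal subgroup to $1$-cocycles. First I would record the auxiliary lemma: \emph{if $K\neq1$ is soluble and acts faithfully and irreducibly on an $\mathbb{F}_pK$-module $V$, then $H^1(K,V)=0$.} Since $V$ is faithful, $C_K(V)=1$. The Fitting subgroup $F(K)$ is nontrivial, and $O_p(K)=1$: a nontrivial normal $p$-subgroup $P$ would have $V^P\neq0$ (a finite $p$-group on a nonzero $\mathbb{F}_p$-space has fixed points), and $V^P$ is a $K$-submodule of the irreducible $V$, so $V^P=V$ and $P\le C_K(V)=1$, a contradiction. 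Hence $N:=O_{p'}(K)=F(K)\neq1$; since $V^N$ is a $K$-submodule that cannot be all of $V$ (else $N\le C_K(V)=1$), we get $V^N=0$, and $H^1(N,V)=0$ because $|N|$ is prime to $p$. The inflation--restriction sequence $0\to H^1(K/N,V^N)\to H^1(K,V)\to H^1(N,V)$ then gives $H^1(K,V)=0$; equivalently $Z^1(K,V)=B^1(K,V)$, i.e.\ every complement of $V$ in $V\rtimes K$ is conjugate to $K$ by an element of $V$.

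For part (1): a row vector $c$ with $cA_i=0$ for all $i$ is fixed by every $X_i$, hence by all of $K$, so $c=0$ since the $K$-fixed vectors form a submodule of the irreducible nontrivial module and hence vanish. Thus $(A_1\ \cdots\ A_d)$ has full row rank $n$; equivalently its row space is the space $\{(c-cX_1,\dots,c-cX_d):c\in F^{1\times n}\}$ of coboundary tuples, of dimension $n-\dim V^K=n$. For part (2) I would reduce $u>1$ to $u=1$ via maximal subgroups. With $H=\langle k_1w_1,\dots,k_dw_d\rangle$ we have $HV^u=G_u$, so $H\neq G_u$ iff $H$ lies in a maximal subgroup $M$; such an $M$ satisfies $MV^u=G_u$ and has $M\cap V^u$ a maximal $K$-submodule of the semisimple module $V^u$, whence $V^u/(M\cap V^u)\cong V$ and $M/(M\cap V^u)$ is a complement of this copy of $V$ in $G_u/(M\cap V^u)\cong V\rtimes K$. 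Since $V$ is absolutely irreducible over $F$, the submodule lattice of $V^u=V\otimes_FF^u$ is the subspace lattice of $F^u$, so the maximal submodules are exactly the kernels of the nonzero $\varphi\in\mathrm{Hom}_{FK}(V^u,V)$, which in coordinates act by $w_i\mapsto c'B_i$ with $0\neq c'\in F^u$. Therefore $H=G_u$ iff $\langle k_1(c'B_1),\dots,k_d(c'B_d)\rangle=V\rtimes K$ for every such $c'$.

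It remains to treat $u=1$: since $V$ is an irreducible normal subgroup of $V\rtimes K$, the subgroup $\langle k_1v_1,\dots,k_dv_d\rangle$ is proper iff it lies in a complement of $V$; complements are in bijection with $Z^1(K,V)$, so properness means $(v_1,\dots,v_d)$ lies in the image of $\delta\mapsto(\delta(k_1),\dots,\delta(k_d))$, which by the auxiliary lemma equals the image of the principal cocycles --- and, after matching the semidirect-product conventions with $A_i=I_n-X_i$ (a routine check), this image is exactly the row space of $(A_1\ \cdots\ A_d)$. Substituting into the reduction above, $H=G_u$ iff for every $c'\neq0$ the row $(c'B_1,\dots,c'B_d)$ lies outside the row space of the $A$-block; combined with part (1) (the case $c'=0$), this is equivalent to the $n+u$ rows of $\bigl(\begin{smallmatrix}A_1&\cdots&A_d\\ B_1&\cdots&B_d\end{smallmatrix}\bigr)$ being $F$-independent, i.e.\ to this $(n+u)\times dn$ matrix having rank $n+u$, which is part (2). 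The generation assertion then follows: a generating $d$-tuple of $G_u$ projects to a generating $d$-tuple of $K$, to which part (2) applies verbatim and forces the rank of an $(n+u)\times dn$ matrix to equal $n+u$, so $u\le n(d-1)$; conversely, if $u\le n(d-1)$ one can extend the $n$ independent rows of the $A$-block by $u$ further rows (arbitrary vectors of $F^{dn}$, since the $v_{i,j}$ range freely over $F^n$) to an $F$-independent family of size $n+u$, producing $w_1,\dots,w_d$ with $\langle k_1w_1,\dots,k_dw_d\rangle=G_u$.

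The one substantial point is the vanishing $H^1(K,V)=0$. Without it, the $Z^1$-tuples --- which detect non-generation in $V\rtimes K$ --- span a space strictly larger than the coboundary tuples forming the row space of $(A_1\ \cdots\ A_d)$, and the implication ``$\mathrm{rank}=n+u\Rightarrow$ generation'' in part (2), and hence the sharp bound $u\le n(d-1)$, would fail. Everything else is the routine passage to maximal subgroups and complements together with matrix bookkeeping.
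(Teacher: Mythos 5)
Your proof is correct: the paper itself gives no argument for this proposition (it is quoted from \cite[Section 4]{CL4}), and your reconstruction follows the standard route used there — the vanishing $H^1(K,V)=0$ for a faithful irreducible module of a soluble group via $O_p(K)=1$, $V^{F(K)}=0$ and inflation--restriction, the identification of coboundary tuples with the row space of $(A_1\ \cdots\ A_d)$, and the reduction from $V^u$ to $V$ through maximal submodules $\ker\varphi$, $\varphi\in\mathrm{Hom}_{FK}(V^u,V)\cong F^u$. I checked the sign/convention point you deferred ($\delta_c(k_i)=c-cX_i=cA_i$ in the chosen semidirect-product convention), and it does come out as claimed, so both parts and the generation criterion $u\le n(d-1)$ follow as you describe.
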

In this section	we will use in particular the following corollary of the previous proposition:
	\begin{cor}\label{coro}
Let $V=\mathbb F_2 \times \mathbb F_2,$ where $\mathbb F_2$ is the field with 2 elements and let $\Gamma=\GL(2,2)\ltimes V^2.$ Assume that $\langle k_1, k_2 \rangle =\GL(2,2)$ and let $\gamma_1=k_1(v_1,v_2),$ $\gamma_2=k_2(v_3,v_4)$ in $\Gamma.$ We have that $\Gamma=\langle \gamma_1, \gamma_2\rangle$ if and only if $$\begin{pmatrix}1-k_1&1-k_2\\v_1&v_3\\v_2&v_4\end{pmatrix}\neq 0.$$
	\end{cor}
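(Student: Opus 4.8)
The plan is to derive Corollary~\ref{coro} as a direct specialization of Proposition~\ref{richiami}. First I would record the relevant parameters of the setup: here the acting group is $K=\GL(2,2)$, the module is $V=\mathbb F_2\times\mathbb F_2$, and since $\GL(2,2)$ acts absolutely irreducibly we have $\End_K(V)=\mathbb F_2$, so $F=\mathbb F_2$ and $n=\dim_F V=2$. The group $\GL(2,2)$ is $2$-generated, so we take $d=2$ and fix a generating pair $(k_1,k_2)$; the hypothesis $u\le n(d-1)=2$ is exactly why we may take $u=2$ and form $\Gamma=G_2=V^2\rtimes\GL(2,2)$ as a $2$-generated group. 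With $d=2$ the matrices $A_i=I_2-X_i$ become $1-k_1$ and $1-k_2$ (writing $k_i$ for the corresponding $2\times2$ matrix over $\mathbb F_2$), each a $2\times2$ block, and the matrices $B_i$ are $u\times n=2\times 2$ blocks whose rows are the components of $w_i$; writing $\gamma_1=k_1(v_1,v_2)$ and $\gamma_2=k_2(v_3,v_4)$ means $B_1$ has rows $v_1,v_2$ and $B_2$ has rows $v_3,v_4$.

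Next I would invoke Proposition~\ref{richiami}(2): $\Gamma=\langle\gamma_1,\gamma_2\rangle$ if and only if
$$\mathrm{rank}\begin{pmatrix}A_1&A_2\\ B_1&B_2\end{pmatrix}=n+u=4.$$
The matrix in question is $(n+u)\times(nd)=4\times 4$, namely
$$\begin{pmatrix}1-k_1&1-k_2\\ v_1&v_3\\ v_2&v_4\end{pmatrix},$$
where the top block contributes $2$ rows and the bottom two blocks each contribute one row (this is the matrix displayed in the statement, with the $2\times 2$ blocks $1-k_1$, $1-k_2$ stacked appropriately; note the display in the statement is really a $4\times4$ matrix written in block form). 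Since this is a square $4\times 4$ matrix, having rank $4$ is equivalent to being nonsingular, i.e. to having nonzero determinant; over $\mathbb F_2$ the condition ``determinant nonzero'' is literally ``$\ne 0$''. So the rank-$4$ condition translates precisely to the stated inequality $\begin{pmatrix}1-k_1&1-k_2\\ v_1&v_3\\ v_2&v_4\end{pmatrix}\neq 0$, where ``$\neq 0$'' abbreviates ``has nonzero determinant''.

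Finally I would note that Proposition~\ref{richiami}(1), $\mathrm{rank}(A_1\ A_2)=n=2$, is automatically satisfied because $(k_1,k_2)$ generates $\GL(2,2)$ and the action is faithful and irreducible (so $V^1\rtimes\GL(2,2)$ is $2$-generated), and hence imposes no extra constraint on the $v_i$. Therefore the only condition is the one coming from part~(2), which is exactly the displayed inequality, completing the proof of the corollary.

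The only genuine subtlety — and the one point to state carefully — is the identification of the abstract rank condition ``$\mathrm{rank}=4$'' of a square matrix over $\mathbb F_2$ with the compact notation ``the matrix $\neq 0$'' used in the corollary; one should make explicit that here $\neq 0$ is shorthand for nonzero determinant (equivalently, invertibility), not for the matrix being a nonzero matrix. Everything else is bookkeeping: matching $d=2$, $n=2$, $u=2$, $F=\mathbb F_2$, and writing out the block matrix of Proposition~\ref{richiami}(2) in the present notation. I expect no real obstacle; the main care needed is purely in the correct transcription of the block structure and the rank-versus-determinant translation over a field.
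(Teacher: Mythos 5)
Your proposal is correct and is exactly the argument the paper intends: the corollary is stated as an immediate specialization of Proposition~\ref{richiami} with $d=2$, $n=2$, $u=2$, $F=\mathbb F_2$, so that the rank-$(n+u)$ condition on the square $4\times 4$ block matrix becomes the nonvanishing of its determinant. Your remark that ``$\neq 0$'' in the statement must be read as ``$\det(\cdot)\neq 0$'' is right and is confirmed by how the corollary is applied later in Section~\ref{esempio}, where the conditions are written with $\det$.
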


Now we are ready to start the construction of a finite 2-generated  soluble $G$ with $\diam(\Delta(G))>2.$ Let $H=\GL(2,2)\times \GL(2,2)$ and let $W=V_1\times V_2\times V_3\times V_4$ be the direct product of four 2-dimensional vector spaces over the field $\mathbb F_2$ with two elements. We define an action of $H$ on $W$ by setting
$$(v_1,v_2,v_3,v_4)^{(x,y)}=(v_1^{x},v_2^{x},v_3^{y},v_4^{y})$$
and we consider the semidirect product $$G=H\ltimes W.$$
Let $$\begin{aligned}
N_1:=&C_G(V_3)=C_G(V_4)=\{(k,1)\mid k\in \GL(2,2)\},\\
N_2:=&C_G(V_1)=C_G(V_2)=\{(1,k)\mid k\in \GL(2,2)\}.\\
\end{aligned}$$
A set of representatives for the $G$-isomorphism classes of
the complemented chief factor of $G$ contains precisely 5 elements:
\begin{itemize}
	\item $Z,$ a central $G$-module of order 2, with $R_G(Z)=G^\prime=\ssl(2,2)^2\ltimes W.$
	\item $U_1,$ a non central $G$-module of order
	3, with $R_G(U_1)=N_2\ltimes W.$
		\item $U_2,$ a non central $G$-module of order
		3, with $R_G(U_2)=N_1\ltimes W.$
	\item $V_1$, with $R_G(V_1)=V_3\times V_4 \times N_2.$
	\item $V_3$, with $R_G(V_3)=V_1\times V_2 \times N_1.$
\end{itemize}
Let $$(x_1,y_1)(v_{11},v_{12},v_{13},v_{14})=g_1, \quad (x_2,y_2)(v_{21},v_{22},v_{23},v_{24})=g_2.$$
We want to apply  Proposition \ref{prouno} to check whether $\langle g_1,g_2 \rangle=G$. The three conditions
$$\langle g_1,g_2\rangle R_G(Z)=G, \langle g_1,g_2\rangle R_G(U_1)=G,  \langle g_1,g_2\rangle R_G(U_2)=G$$ are equivalent to
$\langle g_1,g_2\rangle W=G$, i.e. to $\langle (x_1,y_1), (x_2,y_2)\rangle=H.$ Moreover 
$$\langle g_1,g_2\rangle R_G(V_1)=G \text { if and only if } 
\langle x_1(v_{11},v_{12}), x_2(v_{21},v_{22})\rangle=(V_1\times V_2)\rtimes \GL(2,2),$$ 
$$\langle g_1,g_2\rangle R_G(V_3)=G \text { if and only if } 
\langle y_1(v_{31},v_{32}), y_2(v_{41},v_{42})\rangle=(V_3\times V_4)\rtimes \GL(2,2).$$ 
Applying Corollary \ref{coro} we conclude that
$$\langle g_1,g_2 \rangle=G$$ if and only if the following conditions are satisfied:
$$\begin{aligned}(1)&\quad \quad \quad \quad \quad
\langle (x_1,y_1), (x_2,y_2)\rangle=H=\GL(2,2)\times \GL(2,2),\\
(2)&\quad \quad \quad \quad \quad\det\begin{pmatrix}1-x_1&1-x_2\\v_{11}&v_{21}\\v_{12}&v_{22}
\end{pmatrix}\neq 0,\\
(3)&\quad \quad \quad \quad \quad\det\begin{pmatrix}1-y_1&1-y_2\\v_{13}&v_{23}\\v_{14}&v_{24}
\end{pmatrix}\neq 0.
\end{aligned}$$
Consider the following elements of $\GL(2,2)$:
 $$x:=\begin{pmatrix}1&0\\1&1\end{pmatrix},\quad
y:=\begin{pmatrix}1&1\\1&0\end{pmatrix}, \quad z:=\begin{pmatrix}1&1\\0&1\end{pmatrix},$$
and the following elements of $\mathbb F_2^2$:
 $$0=(0,0),\quad e_1=(1,0),\quad e_2=(0,1).$$
Let
$$\begin{aligned}a_1:=&(x,x)(0,e_2,0,e_2),\quad a_2:=(x,x)(e_1,e_2,e_1,e_2)\\
b_1:=&(y,z)(e_1,0,e_1,0),\quad b_2:=(y,z)(0,0,e_1,0).
\end{aligned}$$
It can be easily checked that
$$\langle (x,x), (y,z) \rangle = H.$$
Moreover
$$\det\begin{pmatrix}1-x&1-y\\0&e_1\\e_2&0\end{pmatrix}=\det\begin{pmatrix}0&0&0&1\\1&0&1&1\\0&0&1&0\\0&1&0&0\end{pmatrix}=1,$$
$$\det\begin{pmatrix}1-x&1-z\\0&e_1\\e_2&0\end{pmatrix}=\det\begin{pmatrix}0&0&0&1\\1&0&0&0\\0&0&1&0\\0&1&0&0\end{pmatrix}=1,$$
$$\det\begin{pmatrix}1-x&1-y\\e_1&0\\e_2&0\end{pmatrix}=\det\begin{pmatrix}0&0&0&1\\1&0&1&1\\1&0&0&0\\0&1&0&0\end{pmatrix}=1,$$
$$\det\begin{pmatrix}1-x&1-z\\e_1&e_1\\e_2&0\end{pmatrix}=\det\begin{pmatrix}0&0&0&1\\1&0&0&0\\1&0&1&0\\0&1&0&0\end{pmatrix}=1,$$
so either $a_1,b_1$ as $a_2,b_2$ satisfy the three conditions (1), (2) (3) and therefore $$\langle a_1, b_1\rangle=\langle a_2, b_2\rangle=G.$$
Now we want to prove that there is no $b\in G$ with  $\langle a_1, b\rangle=\langle a_2, b\rangle=G.$ Let $b=(h_1,h_2)(v_1,v_2,v_3,v_4)
$, and assume by contradiction that $\langle a_1, b\rangle=\langle a_2, b\rangle=G.$ We must have in particular that condition (1) holds, i.e. $\langle (x,x), (h_1,h_2)\rangle=H.$ Since $(x,x)$ has order 2 and $H$ cannot be generated by two involutions (otherwise it would be a dihedral group) at least one of the two elements $h_1, h_2$ must have order 3: it is not restrictive to assume $h_1=y.$ Let $v_1=(\alpha,\beta),$ $v_2=(\gamma,\delta).$ Conditions (2) and (3) must be satisfied, hence we must have
$$\det\begin{pmatrix}1-x&1-y\\0&v_1\\e_2&v_2\end{pmatrix}=
\det\begin{pmatrix}1-x&1-y\\e_1&v_1\\e_2&v_2\end{pmatrix}=1.$$
However 
$$\det\begin{pmatrix}1-x&1-y\\0&v_1\\e_2&v_2\end{pmatrix}=
\det\begin{pmatrix}0&0&0&1\\1&0&1&1\\0&0&\alpha&\beta\\0&1&\gamma&\delta\end{pmatrix}=\alpha,$$
$$\quad \quad \det\begin{pmatrix}1-x&1-y\\e_1&v_1\\e_2&v_2\end{pmatrix}=
\det\begin{pmatrix}0&0&0&1\\1&0&1&1\\1&0&\alpha&\beta\\0&1&\gamma&\delta\end{pmatrix}=\alpha+1.$$
However, since $\alpha\in \mathbb F_2$ either $\alpha=0$ or $\alpha+1=0,$
so there is no $b\in G$ with  $\langle a_1, b\rangle=\langle a_2, b\rangle=G.$

\section{A problem in linear algebra}

Before to prove Theorem \ref{diquattro}, we need to collect a series of results in linear algebra. Denote by $M_{r\times s}(F)$ the set of the $r\times s$ matrices with coefficients over the field $F.$

\begin{lemma}\label{comcom}{\cite[Lemma 3]{CLis}}
	Let $V$ be a finite dimensional vector space over the field $F$.
	If $W_1$ and $W_2$ are subspaces of $V$ with $\dim W_1=\dim W_2$,
	then $V$ contains a subspace $U$ such that $V=W_1\oplus U=W_2\oplus U.$
\end{lemma}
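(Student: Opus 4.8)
The plan is to argue by induction on the common codimension $c=\dim V-\dim W_1=\dim V-\dim W_2$. If $c=0$ then $W_1=W_2=V$ and $U=0$ works, so I may assume $c\geq 1$; in particular $W_1$ and $W_2$ are both proper subspaces of $V$.

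The crucial first step is to produce a vector $v$ lying outside $W_1\cup W_2$. This is always possible: if $W_1\subseteq W_2$ then, since the two subspaces have equal dimension, $W_1=W_2\neq V$, so $W_1\cup W_2$ is a proper subset of $V$; and if neither of $W_1,W_2$ is contained in the other, pick $a\in W_1\setminus W_2$ and $b\in W_2\setminus W_1$ and note that $a+b$ lies in neither subspace (if $a+b\in W_1$ then $b=(a+b)-a\in W_1$, a contradiction, and symmetrically for $W_2$), so again $W_1\cup W_2\neq V$. This argument is valid over an arbitrary field $F$, in particular over the two-element field, where a vector space may well be the union of three proper subspaces but never of two. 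Fix such a $v\in V\setminus(W_1\cup W_2)$.

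Next I pass to $W_i'=W_i+\langle v\rangle$ for $i=1,2$. Since $v\notin W_i$ we have $\dim W_i'=\dim W_i+1$, so $W_1'$ and $W_2'$ again have equal dimension, now with common codimension $c-1$; by the inductive hypothesis there is a subspace $U'$ with $V=W_1'\oplus U'=W_2'\oplus U'$. I then claim that $U:=U'+\langle v\rangle$ is a common complement of $W_1$ and $W_2$. Indeed $W_i+U=W_i+\langle v\rangle+U'=W_i'+U'=V$; and if $w\in W_i\cap U$, writing $w=u+\lambda v$ with $u\in U'$ and $\lambda\in F$ gives $u=w-\lambda v\in W_i+\langle v\rangle=W_i'$, hence $u\in U'\cap W_i'=0$, so $w=\lambda v\in W_i$, which forces $\lambda=0$ and $w=0$. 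A dimension count, $\dim W_i+\dim U=\dim W_i+\dim U'+1=\dim W_i'+\dim U'=\dim V$, then shows the sum $W_i+U$ is direct. The only genuinely substantive point in the whole argument is the existence of a vector outside $W_1\cup W_2$ (where one must make sure nothing goes wrong over small fields); everything after that is routine dimension bookkeeping.
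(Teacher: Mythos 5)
Your argument is correct. The paper does not prove this lemma at all --- it simply cites it as Lemma 3 of \cite{CLis} --- so there is no in-paper proof to compare against; your induction on the common codimension, with the key observation that a vector space over any field (including $\mathbb F_2$) is never the union of two proper subspaces, is a complete and self-contained justification. All the steps check out: the vector $a+b$ indeed avoids both subspaces, $v\notin U'$ follows from $v\in W_1'$ and $W_1'\cap U'=0$, and the intersection argument for $W_i\cap U=0$ is sound (the final dimension count is redundant given that you have already shown $W_i+U=V$ and $W_i\cap U=0$, but it does no harm). For reference, the proof in \cite{CLis} and the common textbook argument run along essentially the same lines (one variant takes $U$ maximal subject to $U\cap W_1=U\cap W_2=0$ and uses the same ``not a union of two proper subspaces'' fact to enlarge $U$ if it fails to supplement), so your route is, if anything, the standard one.
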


\begin{lemma}\label{somma}
	Let $v_1,\dots,v_n,w_1,\dots w_n \in F^n$, where $F$ is a finite field and either $|F|>2$ or $n\neq 1.$ There exist
	$z_1,\dots,z_n\in F^n$ so that the two sequences
	$$v_1+z_1,\dots,v_n+z_n,$$
	$$w_1+z_1,\dots,w_n+z_n$$ are
	both basis of $F^n.$
\end{lemma}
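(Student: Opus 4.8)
The plan is to reduce the statement to a counting argument over the finite field $F$, much in the spirit of the Lemma~\ref{somma} that motivates it. Fix $v_1,\dots,v_n,w_1,\dots,w_n\in F^n$ and think of the $z_i$ as the free variables ranging over $F^n$. For a tuple $(z_1,\dots,z_n)$ to fail, at least one of the two matrices
$$A(z)=\begin{pmatrix}v_1+z_1\\ \vdots\\ v_n+z_n\end{pmatrix},\qquad B(z)=\begin{pmatrix}w_1+z_1\\ \vdots\\ w_n+z_n\end{pmatrix}$$
must be singular. So it suffices to show that the number of $z$ with $\det A(z)=0$ plus the number with $\det B(z)=0$ is strictly less than $|F|^{n^2}$, the total number of tuples. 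By symmetry (replace $v_i$ by $0$ after translating, or just note the two counts are governed by the same polynomial up to an affine shift of variables) it is enough to bound the number of $z$ with $\det A(z)=0$ by $\tfrac12|F|^{n^2}$, with a little room to spare so that the "$+1$" exceptional case $n=1,|F|=2$ is exactly the one that breaks.

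First I would compute the number of $z=(z_1,\dots,z_n)$ for which the rows $v_1+z_1,\dots,v_n+z_n$ form a basis. Since the map $z_i\mapsto v_i+z_i$ is a bijection of $F^n$ for each fixed $i$, this number is exactly the number of ordered bases of $F^n$, namely
$$\prod_{j=0}^{n-1}\bigl(|F|^n-|F|^j\bigr).$$
Hence the number of bad $z$ for $A$ alone is $|F|^{n^2}-\prod_{j=0}^{n-1}(|F|^n-|F|^j)$, and by inclusion–exclusion the number of $z$ that fail the lemma is at most
$$2\Bigl(|F|^{n^2}-\prod_{j=0}^{n-1}(|F|^n-|F|^j)\Bigr).$$
So it remains to check the purely numerical inequality
$$\prod_{j=0}^{n-1}\bigl(|F|^n-|F|^j\bigr)>\tfrac12\,|F|^{n^2},$$
equivalently $\prod_{j=0}^{n-1}\bigl(1-|F|^{j-n}\bigr)>\tfrac12$, for all pairs $(|F|,n)$ except $(|F|,n)=(2,1)$. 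For $|F|\ge 3$ this follows from $\prod_{i\ge 1}(1-3^{-i})>\tfrac12$ (a quick estimate: $\prod_{i\ge1}(1-3^{-i})>1-\sum_{i\ge1}3^{-i}=\tfrac12$, and in fact strictly). For $|F|=2$ we need $\prod_{j=0}^{n-1}(1-2^{j-n})=\prod_{i=1}^{n}(1-2^{-i})>\tfrac12$; this fails for $n=1$ (value $\tfrac12$) but for $n\ge 2$ the value is $\ge \tfrac12\cdot\tfrac34=\tfrac38$... which is \emph{not} $>\tfrac12$, so the crude inclusion–exclusion bound is not quite enough at $|F|=2$ and one must argue more carefully there.

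The main obstacle is therefore precisely the $|F|=2$ case, where $\prod_{i=1}^n(1-2^{-i})$ hovers around $0.288$ and the naive union bound "number of singular $A$ plus number of singular $B$ is less than everything" is false. To handle it I would instead estimate the size of the intersection: I want the number of $z$ with $A(z)$ invertible \emph{and} $B(z)$ invertible, and show it is positive. Fix $z_2,\dots,z_n$ generically so that $v_2+z_2,\dots,v_n+z_n$ and $w_2+z_2,\dots,w_n+z_n$ are each linearly independent — there are many such choices by the ordered-basis count applied to $F^n$ restricted to $(n-1)$-tuples — and then use Lemma~\ref{comcom}: the $(n-1)$-dimensional spans $W_1=\langle v_i+z_i\rangle_{i\ge2}$ and $W_2=\langle w_i+z_i\rangle_{i\ge2}$ have equal dimension, so there is a common complement line, and $z_1$ only has to be chosen so that $v_1+z_1$ lies in that complement for $A$ while $w_1+z_1$ lies in (possibly a different common complement forced by the same $z_2,\dots,z_n$). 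This requires arranging that the affine conditions on $z_1$ coming from $A$ and from $B$ are simultaneously satisfiable; since each excludes an $(n-1)$-dimensional affine subspace (those $z_1$ making the respective matrix singular) and $n\ge 2$ means each such subspace has index $2$ in $F^n$ — so each forbidden set is "half" of $F^n$ — one must show the two forbidden halves do not cover everything. A clean way: reduce to a genuinely $2\times2$ situation over $\mathbb F_2$ by working modulo the common $(n-2)$-dimensional space $W_1\cap W_2$ (nonempty-intersection dimension count again via Lemma~\ref{comcom}), where an explicit finite check over $\mathbb F_2$ finishes the argument; the bookkeeping here is routine once the reduction is set up, and I expect that to be where the real work lies.
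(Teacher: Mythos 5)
Your counting argument is fine for $|F|\ge 3$: the proportion of $z$ making $A(z)$ singular is $1-\prod_{i=1}^n(1-q^{-i})<\tfrac12$ when $q\ge 3$, so the union bound closes the case. But the entire difficulty of the lemma sits in the case $|F|=2$, and there your proof has a genuine gap. As you yourself observe, $\prod_{i=1}^n(1-2^{-i})<\tfrac12$ for every $n\ge 2$, so the union bound never works over $\mathbb F_2$; and the fallback you sketch does not go through as stated. Concretely: after fixing $z_2,\dots,z_n$ so that both $(n-1)$-families are independent, the forbidden sets for $z_1$ are the affine hyperplanes $-v_1+W_1$ and $-w_1+W_2$ with $W_1=\langle v_i+z_i\rangle_{i\ge2}$, $W_2=\langle w_i+z_i\rangle_{i\ge2}$. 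Over $\mathbb F_2$ these two cosets \emph{can} cover all of $F^n$ — exactly when $W_1=W_2$ and $v_1-w_1\notin W_1$ — and this really happens for some admissible choices of $z_2,\dots,z_n$ (e.g.\ $n=2$, $v_1=v_2=w_2=0$, $w_1=e_1$, $z_2=e_2$). So "generic" choice of $z_2,\dots,z_n$ is not enough; you must exhibit a choice avoiding this configuration, and that is precisely the content of the lemma over $\mathbb F_2$. Your closing sentence defers this to "routine bookkeeping" and an unperformed "explicit finite check", so the hard case is not actually proved.

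For comparison, the paper's proof is a two-line reduction to a known result: restated in matrix form, the lemma asks for $C$ with $A+C$ and $B+C$ both invertible; by Zelinsky's theorem (every matrix over $F$ is a sum of two units, except for $1\times 1$ matrices over $\mathbb F_2$) one writes $A-B=U-V$ with $U,V$ invertible and takes $C=U-A=V-B$, so that $A+C=U$ and $B+C=V$. If you want a self-contained argument along your lines, what you must supply is in effect a proof of Zelinsky's theorem over $\mathbb F_2$; the probabilistic shortcut cannot replace it.
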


\begin{proof}
Equivalently, we want to prove that for every pair of matrices $A, B \in
M_{n\times n}(F),$ there exists $C\in M_{n\times n}(F),$ such that $\det(A+C)\neq 0$ and $\det(B+C)\neq 0.$
Since either $|F|>2$ or $n\neq 1,$ every element of $M_{n\times n}(F)$ can be expressed as the sum of two units  \cite{dz}. In particular $A-B=U-V$ with $U, V \in \GL(n,F).$ We may take $C=U-A=B-V.$
\end{proof}

\begin{lemma}\label{prs}Let $F$ be a finite field and assume $r\leq n.$ Given $R\in  M_{r\times n}(F)$ and $S\in M_{r\times r}(F)$ consider the matrix 
$\begin{pmatrix}R&S\end{pmatrix}\in M_{r\times (n+r)}$. Assume
$\ran \begin{pmatrix}R&S\end{pmatrix}=r$ and let $\pi_{R,S}$ be the probability that a matrix $Z \in M_{r\times n}(F)$ satisfies the condition $\ran(R+SZ)=r.$ Then $$\pi_{R,S} > 1-\frac{q^r}{q^n(q-1)}.$$
\end{lemma}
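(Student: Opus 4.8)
The plan is to count the \emph{bad} matrices $Z\in M_{r\times n}(F)$, namely those for which $R+SZ$ fails to have full row rank $r$, and to show there are at most $\frac{q^r-1}{q-1}\,q^{n(r-1)}$ of them; dividing by $q^{nr}$ then yields the claimed inequality.

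First I would note that $\ran(R+SZ)<r$ holds exactly when the $r$ rows of $R+SZ$ are linearly dependent, i.e.\ when there is a nonzero row vector $\lambda\in F^r$ with $\lambda(R+SZ)=0$, equivalently $(\lambda S)Z=-\lambda R$. This condition on $\lambda$ depends only on the line $\langle\lambda\rangle$, so the set $\mathcal B$ of bad $Z$ is the union of the sets $\mathcal B_\lambda=\{\,Z\in M_{r\times n}(F):(\lambda S)Z=-\lambda R\,\}$ as $\langle\lambda\rangle$ runs over the $\frac{q^r-1}{q-1}$ lines of $F^r$; in particular $|\mathcal B|\le\frac{q^r-1}{q-1}\max_{\lambda\neq 0}|\mathcal B_\lambda|$.

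Next I would estimate $|\mathcal B_\lambda|$ for a fixed nonzero $\lambda$. If $\lambda S=0$, then the hypothesis that $\begin{pmatrix}R&S\end{pmatrix}$ has rank $r$ forces $\lambda R\neq 0$ (otherwise $\lambda$ would be a nonzero vector in the left kernel of $\begin{pmatrix}R&S\end{pmatrix}$), so the equation $(\lambda S)Z=-\lambda R$ has no solution and $\mathcal B_\lambda=\emptyset$. If $\lambda S\neq 0$, I would read the equation $(\lambda S)Z=-\lambda R$ one column of $Z$ at a time: for the $j$-th column $Z^{(j)}\in F^r$ it becomes the single equation $(\lambda S)Z^{(j)}=-(\lambda R)_j$, whose coefficient vector $\lambda S$ is nonzero, so it has exactly $q^{r-1}$ solutions; the $n$ columns being independent, $|\mathcal B_\lambda|=q^{n(r-1)}$. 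Hence $|\mathcal B|\le\frac{q^r-1}{q-1}\,q^{n(r-1)}$ and
$$\pi_{R,S}=1-\frac{|\mathcal B|}{q^{nr}}\ \ge\ 1-\frac{q^r-1}{q^n(q-1)}\ >\ 1-\frac{q^r}{q^n(q-1)}.$$

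I do not expect a genuine obstacle: the argument is just the inclusion of the bad set into a union over lines, combined with an elementary count. The only points that need care are invoking the rank hypothesis to discard the lines with $\lambda S=0$, and observing that although the union bound may overcount $\mathcal B$, the final inequality remains strict because $q^r-1<q^r$.
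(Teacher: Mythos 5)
Your proof is correct, and it takes a genuinely different route from the one in the paper. The paper first normalizes $S$ by invertible row and column operations to the form $\begin{pmatrix}I_m&0\\0&0\end{pmatrix}$, observes that the last $r-m$ rows of $R$ are then forced to be linearly independent by the hypothesis $\ran\begin{pmatrix}R&S\end{pmatrix}=r$, and computes the probability \emph{exactly} as the product $\prod_{j=0}^{m-1}\bigl(1-q^{r-m+j-n}\bigr)$ that the randomly shifted rows successively avoid a growing subspace; the stated bound then follows by expanding the product. You instead run a union bound over the $\frac{q^r-1}{q-1}$ lines $\langle\lambda\rangle$ of potential left-kernel vectors, using the rank hypothesis only to rule out the lines with $\lambda S=0$, and counting the solution set of $(\lambda S)Z=-\lambda R$ column by column. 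Your argument is shorter and avoids the normalization of $S$ entirely; its cost is that the union bound overcounts, so you get the slightly weaker estimate $\pi_{R,S}\ge 1-\frac{q^r-1}{q^n(q-1)}$ rather than the paper's exact value (and the paper's intermediate bound $1-\frac{q^{r-m}(q^m-1)}{q^n(q-1)}$, which is sharper when $m<r$). Both land strictly above $1-\frac{q^r}{q^n(q-1)}$ — in your case precisely because $q^r-1<q^r$, a point you correctly flag — and the two bounds coincide when $S$ is invertible, so nothing in the rest of the paper is affected by the difference.
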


\begin{proof}
There exist $m\leq r,$  $X \in \GL(r,F)$ and $Y \in \GL(r,F)$ such that
 $$XSY=\begin{pmatrix}I_m&0\\0&0\end{pmatrix},$$  where $I_m$ is the identity element in $M_{m\times m}(F).$
Since
 $$r=\ran \begin{pmatrix}R&S\end{pmatrix}=\ran\left( X\begin{pmatrix}R&S\end{pmatrix}\begin{pmatrix}I_n&0\\0&Y\end{pmatrix}\right)=\ran\begin{pmatrix}XR&XSY\end{pmatrix}$$
and   $$\begin{aligned}\ran(R+SZ)&=\ran(X(R+SZ))=\ran(XR+XSZ)\\
&= \ran(XR+XSY(Y^{-1}Z)),\end{aligned}$$ 
it is not restrictive (replacing $R$ by $XR$, $S$ by $XSY$ and $Z$ by $Y^{-1}Z$) to assume 
$$S=\begin{pmatrix}I_m&0\\0&0\end{pmatrix}.$$	
 Denote by $v_1,\dots,v_r$ the rows of $R$ and by $z_1,\dots,z_r$ the rows of $Z.$ The fact that the rows of $(R\ S)$ are linearly independent implies that $v_{m+1},\dots,v_r$ are linearly independent vectors of $F^n.$ The condition $\ran(R+SZ)=r$ is equivalent to ask that
 $$v_1+z_1,\dots,v_m+z_m,v_{m+1},\dots,v_r$$ are linearly independent.
The probability that $z_1,\dots,z_m$ satisfy this condition is 
 $$\left(1-\frac{q^{r-m}}{q^n}\right)\left(1-\frac{q^{r-m+1}}{q^n}\right)\cdots \left(1-\frac{q^{r-m+(m-1)}}{q^n}\right).$$
	Hence\
	$$\begin{aligned}\pi_{R,S}&=\left(1-\frac{q^{r-m}}{q^n}\right)\left(1-\frac{q^{r-m+1}}{q^n}\right)\cdots \left(1-\frac{q^{r-m+(m-1)}}{q^n}\right)\\&\geq 1-\frac{q^{r-m}(1+q+\dots+q^{m-1})}{q^n}\\&=1-\frac{q^{r-m}(q^m-1)}{q^n(q-1)}>1-\frac{q^r}{q^n(q-1)}.\qedhere\end{aligned}$$   
\end{proof}
 \begin{lemma}\label{que}
Assume that $F$ is a finite field and that $A$, $B_1,$ $B_2,$ $D_1$ and $D_2$ are elements of $M_{n\times n}(F)$ with the property that
$$\ran \begin{pmatrix}A&B_1\end{pmatrix}=\ran \begin{pmatrix}A&B_2\end{pmatrix}=n,$$
$$\ran \begin{pmatrix}B_1\\D_1\end{pmatrix}=\ran \begin{pmatrix}B_2\\D_2\end{pmatrix}=n.$$ 
Moreover assume that  at least one of the following conditions holds:
\begin{enumerate}
\item $|F|>2$; \item $\det A=0$;
	\item $n\geq 2$ and $(\det B_1, \det B_2)\neq (0,0).$
\end{enumerate}Then there exists $C\in M_{n\times n}(F)$ such that
 $$\det\begin{pmatrix}A&B_1\\C&D_1\end{pmatrix}\neq 0 \quad \text { and } \quad \det\begin{pmatrix}A&B_2\\C&D_2\end{pmatrix}\neq 0.$$
 \end{lemma}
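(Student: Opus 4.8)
The plan is to reduce the two-sided block condition to the row-block problem already handled by Lemma \ref{prs}, together with the ``sum of two units'' trick of Lemma \ref{somma}. The first observation is that the pair of conditions $\det\left(\begin{smallmatrix}A&B_i\\ C&D_i\end{smallmatrix}\right)\neq 0$ is exactly the statement that, after multiplying on the left by $\left(\begin{smallmatrix}I_n&0\\ * &I_n\end{smallmatrix}\right)$, one can clear the bottom-left block; more usefully, one can pick a fixed invertible change of basis so that $A$ is brought to the form $\left(\begin{smallmatrix}I_s&0\\ 0&0\end{smallmatrix}\right)$ for $s=\ran A$. In that normalized situation the hypothesis $\ran\begin{pmatrix}A&B_i\end{pmatrix}=n$ says that the last $n-s$ rows of $B_i$ span an $(n-s)$-dimensional space, and the hypothesis $\ran\begin{pmatrix}B_i\\ D_i\end{pmatrix}=n$ says the full vertical block has rank $n$. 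So I first reduce, exactly as in Lemma \ref{prs}, to the case $A=\left(\begin{smallmatrix}I_s&0\\ 0&0\end{smallmatrix}\right)$, keeping track of how conditions (1)--(3) transform (note $\det A\neq 0$ iff $s=n$, and the rank of $B_i$ is unchanged by the left/right multiplications).

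Next I would treat the easy extreme cases and then the generic counting case. If $s=n$, i.e.\ $A$ is invertible (so we are not in case (2)), then $\det\left(\begin{smallmatrix}A&B_i\\ C&D_i\end{smallmatrix}\right)=\pm\det(A)\det(D_i-CA^{-1}B_i)$, so we just need $C$ with $D_1-CA^{-1}B_1$ and $D_2-CA^{-1}B_2$ both invertible; writing $C'=CA^{-1}$ this is ``find $C'$ with $D_i-C'B_i$ invertible for $i=1,2$''. Here the hypothesis $\ran\begin{pmatrix}B_i\\ D_i\end{pmatrix}=n$ with $B_i$ square forces $B_i$ invertible (a square vertical block of $2n$ rows and $n$ columns has rank $n$ iff the top $n\times n$ block alone already has rank $n$ when... actually more care is needed: rank $n$ of the stacked matrix only means its columns are independent, which is automatic if either block is invertible but does not force it), so I would instead absorb $B_i$ by a right multiplication bringing $B_1$ to the identity, reducing to: find $C'$ with $D_1-C'$ and $D_2-C'B$ invertible where $B=B_1^{-1}B_2$; then take $C'$ so that $D_1-C'$ is any chosen unit $U$, i.e.\ $C'=D_1-U$, and we need $D_2-(D_1-U)B=(D_2-D_1B)+UB$ invertible, i.e.\ $U$ avoids a certain affine condition, and this has positive-probability solution by the usual ``most matrices are invertible'' count as long as $|F|>2$ or $n\geq 2$ — which is guaranteed by (1) or (3) in this sub-case; if instead (2) holds we are not in this sub-case at all.

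For the remaining generic case $s<n$ (so in particular either (2) holds, or we may still be in (1) or (3)), I write $C=\begin{pmatrix}C_{11}&C_{12}\\ C_{21}&C_{22}\end{pmatrix}$ in the same $s+(n-s)$ block decomposition and expand $\det\left(\begin{smallmatrix}A&B_i\\ C&D_i\end{smallmatrix}\right)$ after using the block $\left(\begin{smallmatrix}I_s&0\\0&0\end{smallmatrix}\right)$ to perform column operations that eliminate the first $s$ columns of the bottom block; what survives is a determinant of an $n\times n$ matrix built from the last $n-s$ rows of $B_i$, the last $s$ columns of $D_i$ appropriately modified by $C_{21}$ and $C_{22}$, and the free blocks of $C$ show up linearly. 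The upshot is that the condition becomes a rank-$n$ condition on a matrix of the form $\begin{pmatrix}R_i & S\,Z\end{pmatrix}$-type object together with freedom in $C_{21}$ of exactly the shape handled by Lemma \ref{prs} (with $q^n(q-1)$ in the denominator, so the failure probability for one $i$ is $<\tfrac12$ once $q\geq 2$ and $n-s\geq 1$, whence a common $C$ exists for $i=1,2$ by a union bound), while the degenerate overlap where Lemma \ref{prs}'s bound is not strong enough to union-bound both indices is precisely the low-dimensional case excluded by (1)/(3) or absorbed into (2). The main obstacle I anticipate is bookkeeping: correctly normalizing $A$ while tracking how the ranks of $\begin{pmatrix}A&B_i\end{pmatrix}$, $\begin{pmatrix}B_i\\ D_i\end{pmatrix}$ and the scalars $\det B_i$ change, and then isolating exactly the right sub-block of $C$ to feed into Lemma \ref{prs} so that the two probabilities $\pi_{R_1,S}$, $\pi_{R_2,S}$ each exceed $\tfrac12$; the hypotheses (1), (2), (3) are evidently tailored to cover precisely the small cases ($n=1$, $F=\mathbb F_2$, all $B_i$ singular) where that union bound or the ``sum of two units'' step breaks down, so the real work is verifying that those three cases exhaust the exceptions.
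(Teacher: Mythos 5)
Your overall plan (normalize $A$ to $\bigl(\begin{smallmatrix}I_s&0\\0&0\end{smallmatrix}\bigr)$, split on $s=n$ versus $s<n$, use Lemma \ref{prs} with a union bound in the generic case and a sum-of-two-units argument in the tight case) is aligned with the paper's strategy, but the execution has two genuine gaps, both in the $s=n$ branch. First, your reduction there begins by ``bringing $B_1$ to the identity'' by a right multiplication, which requires $B_1$ to be invertible; as you yourself noticed mid-argument, $\ran\bigl(\begin{smallmatrix}B_i\\D_i\end{smallmatrix}\bigr)=n$ does \emph{not} force $B_i$ invertible, and hypothesis (1) ($|F|>2$, $\det A\neq 0$) permits both $B_1$ and $B_2$ to be singular. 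Your branch simply does not cover that case. The paper avoids this by transposing to the problem $\ran(R_i+S_iZ)=r$ with $\ran(R_i\ S_i)=r$ and applying Lemma \ref{prs}, which needs no invertibility of $S_i$ and whose bound $1-q^r/(q^n(q-1))$ already exceeds $1/2$ whenever $q>2$, even for $r=n$. Second, even when $B_1$ is invertible, your final step --- choose a unit $U$, set $C'=D_1-U$, and argue that $(D_2-D_1B)+UB$ is invertible for some such $U$ ``by the usual most-matrices-are-invertible count'' --- is wrong over $\mathbb F_2$: the proportion of invertible matrices in $M_{n\times n}(\mathbb F_2)$ is $\prod_{i=1}^n(1-2^{-i})<1/2$ for $n\geq 2$, so no naive counting or union bound closes this case, and Zelinsky's theorem (every matrix is a sum of two units) only applies directly when $B=B_1^{-1}B_2$ is itself invertible. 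This $q=2$, $r=n$ case is exactly where the paper does its real work: it normalizes $S_1=I_n$, $S_2=\bigl(\begin{smallmatrix}I_m&0\\0&0\end{smallmatrix}\bigr)$, passes to the quotient by $\langle w_{m+1},\dots,w_n\rangle$, applies Lemma \ref{somma} when $m\neq 1$, and gives a separate hand construction when $m=1$. None of that is present in your sketch.

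A smaller but still substantive omission: in the $s<n$ branch the condition does not reduce purely to Lemma \ref{prs}. After column operations the determinant becomes $\pm\det\bigl(C_2\ \ D_{i1}-C_1B_{i1}^*\bigr)$, so after choosing $C_1$ so that both $D_{i1}-C_1B_{i1}^*$ have rank $r$ (this is the Lemma \ref{prs} step, applied to transposes, with the union bound you describe), one still needs a \emph{single} block $C_2$ whose columns complete both column spaces to bases of $F^n$ simultaneously; this is Lemma \ref{comcom} (the common complement lemma), which your sketch never invokes. I would regard the $s=n$ issues as the essential gaps and this one as a missing but recoverable step.
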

\begin{proof} Let $r=\ran(A).$ There exist $X,Y \in \GL(n,F)$ such that
 $$XAY=\begin{pmatrix}I_r&0\\0&0\end{pmatrix}$$
 where $I_r$ is the identity element in $M_{r\times r}(F).$
 Let $B_{11}, B_{21}\in M_{r\times n}(F)$ and $B_{12}, B_{22}\in M_{(n-r)\times n}(F)$
 such that
 $$XB_1=\begin{pmatrix}B_{11}\\B_{12}\end{pmatrix},\quad XB_2=\begin{pmatrix}B_{21}\\B_{22}\end{pmatrix}.$$
For $i\in\{1,2\},$ since
 $$n=\ran \begin{pmatrix}A & B_i\end{pmatrix}=
 \ran \left(X\begin{pmatrix}A&B_i\end{pmatrix}\begin{pmatrix}Y&0\\0&I_n\end{pmatrix}\right)=\ran \begin{pmatrix}I_r&0&B_{i1}\\0&0&B_{i2}\end{pmatrix},$$
 it must be $\ran (B_{i2})=n-r$.
In particular there exists $Z_i\in \GL(n,F)$ such that
$$XB_iZ_i=\begin{pmatrix}B_{i1}\\B_{i2}\end{pmatrix}Z_i=\begin{pmatrix}B_{i1}^*&B_{i2}^*\\0&I_{n-r}\end{pmatrix}$$
with $B_{i1}^* \in M_{r\times r}(F),$ $B_{i2}^* \in M_{r\times (n-r)}(F).$
Notice that
$$\begin{aligned}\det\begin{pmatrix}XAY&XB_iZ_i\\CY&D_iZ_i\end{pmatrix}&=
\det\left(\begin{pmatrix}X&0\\0&I_n\end{pmatrix}\begin{pmatrix}A&B_i\\C&D_i\end{pmatrix}
\begin{pmatrix}Y&0\\0&Z_i\end{pmatrix}
\right)\\&=
\det(X)\det(Y)\det(Z_i)\det\begin{pmatrix}A&B_i\\C&D_i\end{pmatrix}.
\end{aligned}$$
This means that it is not restrictive to assume
$$A=\begin{pmatrix}I_r&0\\0&0\end{pmatrix},\quad B_i=\begin{pmatrix}B_{i1}^*&B_{i2}^*\\0&I_{n-r}\end{pmatrix}$$
with $B_{i1}^* \in M_{r\times r}(F),$ $B_{i2}^* \in M_{r\times (n-r)}(F).$
Let $C_1, D_{i1}\in M_{n\times r}(F)$ and $C_2,D_{i2}\in M_{n\times (n-r)}(F)$ such that $\begin{pmatrix}C_1&C_2\end{pmatrix}=C$ and
$\begin{pmatrix}D_{i1}&D_{i2}\end{pmatrix}=D.$ Notice that
	$$\quad\quad\quad \begin{aligned} \det\begin{pmatrix}A&B_i\\C&D_{i}\end{pmatrix}=&
	\det\begin{pmatrix}I_r&0&B_{i1}^*&B_{i2}^*\\0&0&0&I_{n-r}\\C_1&C_{2}&D_{i1}&D_{i2}\end{pmatrix}\\&=
	(-1)^n\det\begin{pmatrix}I_r&0&B_{i1}^*\\C_1&C_{2}&D_{i1}\end{pmatrix}\\&=(-1)^n\det\left(\begin{pmatrix}I_r&0&B_{i1}^*\\C_1&C_{2}&D_{i1}\end{pmatrix}
	\begin{pmatrix}I_r&0&-B_{i1}^*\\0&I_{n-r}&0\\0&0&I_r\end{pmatrix}\right)\\&=(-1)^n\det\begin{pmatrix}I_r&0&0\\C_1&C_{2}&D_{i1}-C_1B_{i1}^*\end{pmatrix}\\
	&=(-1)^n\det\begin{pmatrix}C_{2}&D_{i1}-C_1B_{i1}^*\end{pmatrix}.
	\end{aligned}$$
Assume that we can find $C_1$ such that $$\ran (D_{11}-C_1B_{11}^*) = \ran (D_{21}-C_1B_{21}^*)=r$$ and let $W_1, W_2$ be the subspaces of $F^n$ spanned, respectively, by the columns of the two matrices $D_{11}-C_1B_{11}^*$ and
$D_{21}-C_1B_{21}^*$. By Lemma \ref{comcom}, there exists a subspace $U$ of $F^n$ such that $F^n=W_1\oplus U=W_2\oplus U.$ If $C_2$ is a matrix whose columns are a basis for $U,$ then  
	$$\det\begin{pmatrix}C_{2}&D_{11}-C_1B_{11}^*\end{pmatrix}\neq 0
	 \text { and } \det\begin{pmatrix}C_{2}&D_{21}-C_1B_{21}^*\neq 0\end{pmatrix}
	 $$ and $C=(C_1 \ C_2)$ is a matrix with the request property.
Set $$R_1=D_{11}^\text{T},\ R_2=D_{21}^\text{T},\ S_1=B_{11}^{*\text{T}},\ S_2=B_{21}^{*\text{T}},\ Z= -C_1^\text{T}.$$ The previous observation implies that a matrix $C$ with the requested properties exists if and only if there exists $Z\in M_{r\times n}(F)$ such that
\begin{equation}\ran(R_1+S_1Z)=\ran(R_2+S_2Z)=r.
\end{equation} Notice that $R_1, R_2\in M_{r\times n}(F)$, $S_1, S_2\in M_{r\times r}(F)$ have the property that $$\ran \begin{pmatrix}R_1&S_1\end{pmatrix}=\ran \begin{pmatrix}R_2&S_2\end{pmatrix}=r.$$
First assume that either $|F|=q>2$ or $r<n:$ by Lemma \ref{prs}, we have
$$\pi_{R_1,S_1}>\frac{1}{2} \quad \text { and }\quad  \pi_{R_2,S_2}>\frac{1}{2}$$ and this is sufficient to ensure that a matrix $Z$ with the requested property exists. Therefore we may assume $r=n$ (i.e. $\det A\neq 0$) and $q=2.$ In this case, we assume also that at least one of the two matrices $B_1$ and $B_2$ is invertible. Let for example $\det B_1\neq 0.$ This implies $\det S_1\neq 0.$  There exist $m\leq n$ and $X,Y \in \GL(n,F)$ such that
$$XS_2Y=\begin{pmatrix}I_m&0\\0&0\end{pmatrix}.$$ Notice that $R_2+S_2Z$ is invertible if and only if $XR_2Y+XS_2YY^{-1}ZY$ is invertible. Moreover  $R_1+S_1Z$ is invertible if and only if $R_1Y+S_1YY^{-1}ZY$ is invertible, if and only if $(S_1Y)^{-1}R_1Y+Y^{-1}ZY$ is invertible. This means that (replacing $R_1$ by $(S_1Y)^{-1}R_1Y$, $R_2$ by $XR_2Y$ and $Z$ by $Y^{-1}ZY$) we may assume
$$S_1=I_n \text { and }
S_2=\begin{pmatrix}I_m&0\\0&0\end{pmatrix}.$$
Let $v_1,\dots,v_n$ be the rows of $R_1,$ $w_1,\dots,w_n$ the rows of $R_2$ and $z_1,\dots,z_n$ the rows of $Z.$ Our request on $Z$ is equivalent to ask that the sequences
$$v_1+z_1,\dots,v_m+z_m,v_{m+1}+z_{m+1},\dots,v_n+z_n,$$
$$w_1+z_1,\dots,w_m+z_m,w_{m+1},\dots,w_n,$$
are both linearly independent. Notice that the condition $\ran (R_2\ S_2)=n$ implies in particular that $w_{m+1},\dots,w_n$ are linearly independent. First assume $m\neq 1$. For $j>m,$ let $z_j=v_j+w_j$ so that
$z_j+v_j=w_j$ and let $W=\langle w_{m+1},\dots,w_n\rangle.$ We then work in the vector space $F^n/W$ of dimension $m$ and our request is that the vectors $v_1+z_1+W,\dots,v_m+z_m+W$ and the vectors $w_1+z_1+W,\dots,w_m+z_m+W$ are linearly independent: Lemma \ref{somma} ensures that this request is fulfilled for a suitable choice of $z_1,\dots,z_m.$ Finally assume $m=1$. As before for  $j>2,$ let $z_j=v_j+w_j$ and let $W=\langle w_{3},\dots,w_n\rangle.$
We want to find $z_1$ and $z_2$ so that the two vectors
$v_1+z_1+W, v_2+z_2+W$ and the two vectors $w_1+z_1+W, w_2+W$ are linearly independent. This is always possible. First choose $z_1$ so that
$\langle w_1+z_1+W\rangle \notin \langle w_2+W\rangle$ and $v_1+z_1\notin W.$ Once $z_1$ has been fixed, choose $z_2$ so that
$\langle v_2+z_2+W\rangle \notin \langle v_1+z_1+W\rangle.$
\end{proof}

\begin{rem}
{\rm{Notice that when $|F|=2$ and $\det A\neq 0,$ we cannot drop the assumption $(\det B_1,\det B_2)\neq (0,0).$ Consider for example
		$$A=\begin{pmatrix}0&1\\1&1\end{pmatrix}, \ B_1=B_2=\begin{pmatrix}0&0\\1&0\end{pmatrix},\ D_1=\begin{pmatrix}0&0\\0&1\end{pmatrix},\ D_2=\begin{pmatrix}1&0\\0&1\end{pmatrix}.$$
		Then, as we noticed at the end of Section \ref{esempio}, there is no $C \in M_{2\times 2}(F)$ with $$\det\begin{pmatrix}A&B_1\\C&D_1\end{pmatrix}\neq 0 \quad \text { and } \quad \det\begin{pmatrix}A&B_2\\C&D_2\end{pmatrix}\neq 0.$$	
This restriction in the statement of Lemma \ref{que} is indeed the reason why we cannot have $\diam(\Delta(G))=2$ for every 2-generated finite soluble group $G.$}}
\end{rem}

\begin{prop} Let $F$ be a finite field and let $n$ be a positive integer. Assume that either $n\geq 2$ or $|F|>2.$ Assume that $A_0$, $A_1,$ $A_2,$ $A_3,$  $B_0$ and $B_3$  are elements of $M_{n\times n}(F)$ with the property that
	$$\ran (A_0 \ A_1)=\ran (A_1 \ A_2)=\ran (A_2\ A_3)$$ and
	$$\ran \begin{pmatrix}A_0\\B_0\end{pmatrix}=\ran \begin{pmatrix}A_3\\B_3\end{pmatrix}=n.$$ Then there exist $B_1, B_2 \in M_{n\times n}(F)$
 such that
	$$\det\begin{pmatrix}A_0&A_1\\B_0&B_1\end{pmatrix}\neq 0,\ \det\begin{pmatrix}A_1&A_2\\B_1&B_2\end{pmatrix}\neq 0,\
\det\begin{pmatrix}A_2&A_3\\B_2&B_3\end{pmatrix}\neq 0. 	
$$
\end{prop}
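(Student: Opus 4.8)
The plan is to choose $B_1$ and $B_2$ one at a time. One of them, say $B_1$, is chosen to satisfy only the first of the three required inequalities (a ``decoupled'' choice); the remaining two inequalities then involve only the still‑free $B_2$ and are exactly of the shape produced by Lemma~\ref{que}, so $B_2$ is obtained from that lemma. Since over $\mathbb F_2$ Lemma~\ref{que} needs one of its alternatives (2), (3), I will run this in whichever of the two symmetric orders (fix $B_1$ first, or fix $B_2$ first) meets such an alternative, and a short case check shows one order always does. (I read the hypothesis as also asking $\ran(A_0\,A_1)=\ran(A_1\,A_2)=\ran(A_2\,A_3)=n$: some such assumption is necessary, since if, say, $\ran(A_0\,A_1)<n$ then the first $2n\times 2n$ determinant vanishes identically.)

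\emph{Step 1 (the decoupled choice).} I first claim that, using only $\ran(A_0\,A_1)=n$ and $\ran\left(\begin{smallmatrix}A_0\\B_0\end{smallmatrix}\right)=n$, one can find $B_1$ with $\det\left(\begin{smallmatrix}A_0&A_1\\B_0&B_1\end{smallmatrix}\right)\neq 0$, and that any such $B_1$ automatically has $\ran\left(\begin{smallmatrix}A_1\\B_1\end{smallmatrix}\right)=n$ (those are the last $n$ columns of an invertible matrix). For existence, pick $P\in\GL(2n,F)$ with $P\left(\begin{smallmatrix}A_0\\B_0\end{smallmatrix}\right)=\left(\begin{smallmatrix}I_n\\0\end{smallmatrix}\right)$ and let $(P_1\ P_2)$ be its bottom $n$ rows, so $P_1A_0+P_2B_0=0$ and $\ran(P_1\ P_2)=n$; then $\det\left(\begin{smallmatrix}A_0&A_1\\B_0&B_1\end{smallmatrix}\right)\neq 0$ iff $P_1A_1+P_2B_1$ is invertible. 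From $\mathrm{col}\,A_0+\mathrm{col}\,A_1=F^n$, $P_1(\mathrm{col}\,A_0)\subseteq\mathrm{im}\,P_2$ and $\mathrm{im}\,P_1+\mathrm{im}\,P_2=F^n$ one gets $\mathrm{im}(P_1A_1)+\mathrm{im}\,P_2=F^n$; choosing $B_1$ so that $P_2B_1$ maps $\ker(P_1A_1)$ isomorphically onto a complement of $\mathrm{im}(P_1A_1)$ inside $\mathrm{im}\,P_2$ then makes $P_1A_1+P_2B_1$ invertible. By the same statement with the four blocks permuted, one may instead pick $B_2$ first with $\det\left(\begin{smallmatrix}A_2&A_3\\B_2&B_3\end{smallmatrix}\right)\neq 0$ and $\ran\left(\begin{smallmatrix}A_2\\B_2\end{smallmatrix}\right)=n$.

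\emph{Step 2 (the coupled choice, via Lemma~\ref{que}).} With $B_1$ fixed as in Step~1, swap the two block columns in $\det\left(\begin{smallmatrix}A_1&A_2\\B_1&B_2\end{smallmatrix}\right)\neq 0$; the two conditions still to be met read
$$\det\begin{pmatrix}A_2&A_1\\B_2&B_1\end{pmatrix}\neq 0,\qquad \det\begin{pmatrix}A_2&A_3\\B_2&B_3\end{pmatrix}\neq 0,$$
which is precisely the conclusion of Lemma~\ref{que} for the quintuple $A_2,A_1,A_3,B_1,B_3$ in the roles of $A,B_1,B_2,D_1,D_2$ there, with $C=B_2$ the matrix sought. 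Its four rank hypotheses hold: $\ran(A_2\,A_1)=\ran(A_2\,A_3)=n$ by assumption, $\ran\left(\begin{smallmatrix}A_1\\B_1\end{smallmatrix}\right)=n$ by Step~1, and $\ran\left(\begin{smallmatrix}A_3\\B_3\end{smallmatrix}\right)=n$ by assumption. Hence this order succeeds as soon as one of Lemma~\ref{que}'s alternatives holds: $|F|>2$, or $\det A_2=0$, or ($n\geq 2$ and $(\det A_1,\det A_3)\neq(0,0)$). Symmetrically, fixing $B_2$ first and feeding the two conditions $\det\left(\begin{smallmatrix}A_1&A_0\\B_1&B_0\end{smallmatrix}\right)\neq 0$, $\det\left(\begin{smallmatrix}A_1&A_2\\B_1&B_2\end{smallmatrix}\right)\neq 0$ on $B_1$ into Lemma~\ref{que} with $A=A_1$ succeeds as soon as $|F|>2$, or $\det A_1=0$, or ($n\geq 2$ and $(\det A_0,\det A_2)\neq(0,0)$).

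It remains to see one of the two orders always applies. If $n=1$ the hypothesis forces $|F|>2$ and both work. If $n\geq 2$: either $\det A_1\neq 0$, whence $(\det A_1,\det A_3)\neq(0,0)$ and the first order works; or $\det A_1=0$ and the second order works. I expect the genuinely delicate part — and the reason for the hypothesis ``$n\geq 2$ or $|F|>2$'' — to be exactly this $|F|=2$ bookkeeping, where (as the Remark after Lemma~\ref{que} shows) alternative (3) is indispensable and the split on $\det A_1$ is what makes the two symmetric reductions jointly exhaustive; Step~1 and this final case check, rather than the invocation of Lemma~\ref{que}, are where the work lies.
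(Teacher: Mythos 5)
Your proposal is correct and follows essentially the same route as the paper: choose one of $B_1,B_2$ greedily to satisfy a single determinant condition, then obtain the other from Lemma~\ref{que}, with the order of the two choices decided (when $|F|=2$) by whether $\det A_1$ vanishes — exactly the paper's case split $i\in\{1,2\}$. The only differences are cosmetic: you spell out the existence of the first ``decoupled'' choice (which the paper leaves implicit) and you correctly note that the displayed rank hypothesis should read $\ran(A_0\ A_1)=\ran(A_1\ A_2)=\ran(A_2\ A_3)=n$.
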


\begin{proof}
Set $i=1$ if either $|F|>2$ or $|F|=2$ and $\det A_1=0,$ $i=2$ otherwise.
\ 

\noindent a) Assume $i=1.$ First choose $B_2$ so that $$\det\begin{pmatrix}A_2&A_3\\B_2&B_3\end{pmatrix}\neq 0.$$ Then, since either $\det A_1=0$ or $|F|>2,$  by Lemma \ref{que} there exists $B_1$ such that 	$$\det\begin{pmatrix}A_0&A_1\\B_0&B_1\end{pmatrix}\neq 0,\ \det\begin{pmatrix}A_1&A_2\\B_1&B_2\end{pmatrix}\neq 0.$$

\noindent b) Assume $i=2.$ First choose $B_1$ so that 	$$\det\begin{pmatrix}A_0&A_1\\B_0&B_1\end{pmatrix}\neq 0 .$$ Then, since $\det A_1\neq 0$ or $|F|>2,$  by Lemma \ref{que} there exists $B_2$ such that 	$$\det\begin{pmatrix}A_1&A_2\\B_1&B_2\end{pmatrix}\neq 0,\
\det\begin{pmatrix}A_2&A_3\\B_2&B_3\end{pmatrix}\neq 0. \qedhere $$
\end{proof}

\begin{cor}\label{corfour}
Let $K$ be a non-trivial 2-generated linear soluble group acting irreducibly and faithfully on $V$ and consider the semidirect product $G=V^\delta \rtimes K$ with $\delta\leq n=\dim_{\End_G(V)}V.$ Assume that there exists $x_0,x_1,x_2,x_3,$ in $K$ such that
\begin{enumerate}
\item $x_0w_0$ and $x_3w_3$ are non isolated vertices in the generating graph of
$G,$
\item $\langle x_0, x_1\rangle=\langle x_1, x_2\rangle=\langle x_2, x_3\rangle=K.$
\end{enumerate}
Then there exist $w_1,w_2 \in G$ with
$$\langle x_0, x_1w_1\rangle=\langle x_1w_1, x_2w_2\rangle=\langle x_2w_2, x_3\rangle=G.$$
\end{cor}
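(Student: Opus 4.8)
The plan is to translate the group-theoretic statement into the matrix statement of the preceding Proposition and then apply it directly. Fix a generating pair $(k_1,k_2)$ of $K$ and identify $K$ with a subgroup of $\GL(n,F)$, where $F=\End_G(V)$ and $n=\dim_F V$, exactly as in the setup preceding Proposition \ref{richiami}. For each element $x_j\in K$ write $A_j=I_n-x_j$ as an $n\times n$ matrix over $F$. Condition (2), that $\langle x_0,x_1\rangle=\langle x_1,x_2\rangle=\langle x_2,x_3\rangle=K$, will be fed into the rank hypotheses of the Proposition via part (1) of Proposition \ref{richiami}: since $K=\langle x_j,x_{j+1}\rangle$ for $j=0,1,2$, we get $\ran(A_0\ A_1)=\ran(A_1\ A_2)=\ran(A_2\ A_3)=n$ (here I would check that $\langle x_j,x_{j+1}\rangle=K$ really does give the rank statement in the form needed; this is where one uses that $x_0,\dots,x_3$ all generate $K$ pairwise with their neighbours, not merely that $K$ is $2$-generated).

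Next I would unpack condition (1). Write $w_0=(v_{0,1},\dots,v_{0,\delta})\in V^\delta$ and let $B_0$ be the $\delta\times n$ matrix with these rows; similarly for $w_3$ and $B_3$. Saying that $x_0w_0$ is a non-isolated vertex of the generating graph of $G=V^\delta\rtimes K$ means there is some $y\in K$ and $w\in V^\delta$ with $\langle x_0w_0, yw\rangle=G$; by part (2) of Proposition \ref{richiami} applied with $d=2$ this forces the $(n+\delta)\times 2n$ matrix with blocks $(A_0,A_y)$ on top and $(B_0,B_w)$ below to have rank $n+\delta$, which in particular forces $\ran\binom{A_0}{B_0}=n$. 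Symmetrically $\ran\binom{A_3}{B_3}=n$. (If $\delta<n$ these are $\delta\times n$ and then $(n+\delta)$-wide matrices; to match the square-matrix formulation of the Proposition exactly, one can pad $B_0,B_3$ by zero rows to make them $n\times n$ — this does not change any rank that matters, since the added rows are zero and the hypotheses only assert that certain ranks equal $n$, while the conclusions are nonvanishing of determinants of $2n\times 2n$ matrices whose relevant rows come from $B_1,B_2$, which we get to choose. A cleaner route is to restate the Proposition for rectangular $B_i$ of size $\delta\times n$, but padding is the shortest fix given what's proved.)

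With the hypotheses of the Proposition verified — the dichotomy "$n\ge 2$ or $|F|>2$" is exactly the hypothesis of Corollary \ref{corfour} read through $\delta\le n$, and one checks $n\ge2$ or $|F|>2$ holds because $K$ is nontrivial soluble and acts faithfully and irreducibly (if $n=1$ then $K\le\GL(1,F)=F^\times$ is nontrivial cyclic, forcing $|F|>2$) — I would invoke it to produce $B_1,B_2\in M_{n\times n}(F)$, equivalently $w_1,w_2\in V^\delta$ (taking the first $\delta$ rows, or choosing $B_1,B_2$ of size $\delta\times n$ directly in the rectangular version), with
$$\det\begin{pmatrix}A_0&A_1\\B_0&B_1\end{pmatrix}\neq 0,\quad \det\begin{pmatrix}A_1&A_2\\B_1&B_2\end{pmatrix}\neq 0,\quad \det\begin{pmatrix}A_2&A_3\\B_2&B_3\end{pmatrix}\neq 0.$$
By part (2) of Proposition \ref{richiami} (in the form of Corollary \ref{coro}'s reasoning, generalized to dimension $n$ and $\delta$ copies) each nonvanishing determinant is equivalent to $\langle x_j w_j, x_{j+1}w_{j+1}\rangle = V^\delta\rtimes K=G$ for $j=0,1,2$ (with $w_0,w_3$ the given ones), which is precisely the desired conclusion $\langle x_0,x_1w_1\rangle=\langle x_1w_1,x_2w_2\rangle=\langle x_2w_2,x_3\rangle=G$.

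The main obstacle I anticipate is purely bookkeeping rather than conceptual: reconciling the square-matrix formulation of the Proposition with the possibility $\delta<n$, and making sure the rank conditions extracted from "non-isolated vertex" are stated with the correct matrix sizes. Once the dictionary between Proposition \ref{richiami}(2) and the block-determinant conditions is set up carefully — including the $\delta$ versus $n$ padding — the corollary is an immediate corollary of the Proposition, with no further computation needed.
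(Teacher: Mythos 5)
Your overall strategy --- identify $K$ with a subgroup of $\GL(n,F)$, translate hypotheses and conclusion into the block-matrix language of Proposition \ref{richiami}, and invoke the linear-algebra proposition preceding the corollary --- is exactly the paper's, and in the square case $\delta=n$ your argument is complete, including the check that $n\geq 2$ or $|F|>2$ follows from $K$ being a nontrivial faithful irreducible soluble linear group.

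The gap is precisely in the $\delta<n$ bookkeeping that you dismiss as routine. First, your claim that non-isolatedness of $x_0w_0$ forces $\ran\begin{pmatrix}A_0\\B_0\end{pmatrix}=n$ is false when $\delta<n$: full row rank of an $(n+\delta)\times 2n$ matrix says nothing about the rank of its first $n$ columns. Concretely, in $G=S_4=V\rtimes\GL(2,2)$ with $\delta=1$, $n=2$, take $x_0=\begin{pmatrix}1&1\\0&1\end{pmatrix}$ and $w_0=(0,1)$: then $x_0w_0$ is a transposition of $S_4$, hence a non-isolated vertex, and $x_0$ extends to a generating pair of $\GL(2,2)$, yet $\ran\begin{pmatrix}A_0\\B_0\end{pmatrix}=1<2$. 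Second, zero-padding makes things worse rather than neutral: if $\begin{pmatrix}A_0\\B_0^{\mathrm{pad}}\end{pmatrix}$ has rank $<n$, then \emph{every} determinant $\det\begin{pmatrix}A_0&A_1\\B_0^{\mathrm{pad}}&B_1\end{pmatrix}$ vanishes, so the padded target is unattainable even though the genuine rank-$(n+\delta)$ condition may well be attainable. The repair --- and the route the paper takes --- is to reduce to the square case \emph{before} translating: $V^\delta\rtimes K$ is an epimorphic image of $V^n\rtimes K$, and one lifts $w_0,w_3$ to $\hat w_0,\hat w_3\in V^n$ so that $x_0\hat w_0$ and $x_3\hat w_3$ are still non-isolated (possible because a full-row-rank $(n+\delta)\times 2n$ matrix can be completed to an invertible $2n\times 2n$ matrix by adjoining $n-\delta$ rows, and these extra rows become the padding of $B_0$, which is in general nonzero). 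In $V^n\rtimes K$ all blocks are square, the rank hypotheses of the proposition do follow, your argument applies verbatim, and the resulting $w_1,w_2$ project back down to $V^\delta$.
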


\begin{proof}
Since $V^\delta\rtimes K$ is an epimorphic image of $V^n\rtimes K$, it suffices to prove the statement in the particular case $G=V^n\times K.$
We may identify $x_0, x_1, x_2, x_3$ with $X_0, X_1, X_2, X_3 \in \GL(n,F)$, where $F=\End_G(V)$ and  $w_0, w_1, w_2, w_3\in V^n$ with four matrices $B_0,B_1,B_2,B_3$ in
$M_{n\times n}(F).$ We now apply Proposition \ref{richiami}. Let
$$A_0=I_n-X_0,\ A_1=I_n-X_1,\ A_2=I_n-X_2,\ A_3=I_n-X_3.$$
Conditions (1) and (2)  implies that 	$$\ran (A_0 \ A_1)=\ran (A_1 \ A_2)=\ran (A_2\ A_3)$$ and
$$\ran \begin{pmatrix}A_0\\B_0\end{pmatrix}=\ran \begin{pmatrix}A_3\\B_3\end{pmatrix}=n.$$ Moreover the statement is equivalent to say that there exist $B_1, B_2 \in M_{n\times n}(F)$
with
$$\det\begin{pmatrix}A_0&A_1\\B_0&B_1\end{pmatrix}\neq 0,\ \det\begin{pmatrix}A_1&A_2\\B_1&B_2\end{pmatrix}\neq 0,\
\det\begin{pmatrix}A_2&A_3\\B_2&B_3\end{pmatrix}\neq 0. 	
$$
The existence of $B_1$ and $B_2$ is ensured by Lemma \ref{que} (notice that the fact that $K$ is a non-trivial subgroup of $\GL(n,F)$ implies that
$n\geq 2$ if $|F|=2$).
\end{proof}

\section{Proof of Theorem \ref{diquattro}}
At the beginning of Section \ref{esempio} we recalled some properties of the crowns of a finite soluble group. In the proof of Theorem  \ref{diquattro}, we will use other two related results.
\begin{lemma}{\cite[Lemma 1.3.6]{classes}}\label{corona}
	Let $G$ be a finite solvable group with trivial Frattini subgroup. There exists
	a crown $C/R$ and a non trivial normal subgroup $U$ of $G$ such that $C=R\times U.$
\end{lemma}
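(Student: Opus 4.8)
The final statement is Lemma~\ref{corona}, which is quoted from \cite[Lemma 1.3.6]{classes}, so strictly speaking there is nothing to prove. Nevertheless, let me sketch how one proves this structural fact about crowns, since it is the last displayed statement.

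\medskip

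The plan is to exploit the definition of the crown directly. Let $G$ be finite soluble with $\frat(G)=1$, and pick any complemented chief factor; equivalently, pick $V\in\mathcal V_G$ with $\delta_G(V)\neq 0$, so that the $V$-crown $C/R = C_G(V)/R_G(V)$ is $G$-isomorphic to $V^{\delta_G(V)}$ and, by the remark recalled in Section~\ref{esempio}, equals the socle of $G/R$. The first step is to observe that since $C/R$ has a complement in $G/R$ (this is built into the definition of $R_G(V)$), we may write $G/R = (C/R)\rtimes (L/R)$ for some subgroup $L$ with $R\le L\le G$, $L\cap C = R$ and $LC=G$.

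\medskip

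The key step is then to produce a \emph{$G$-invariant} complement to $R$ inside $C$, i.e.\ a normal subgroup $U$ of $G$ with $C = R\times U$. Here one uses that $C/R$ is a completely reducible $G$-module (a direct sum of copies of the irreducible module $V$), together with $\frat(G)=1$. Because $\frat(G)=1$, the subgroup $R=R_G(V)$ — being an intersection of maximal subgroups modulo which $C_G(V)$ splits — is itself complemented in $C$ in a way that can be chosen normal in $G$: one shows that $C$, viewed as a $G$-group, splits over $R$ equivariantly. Concretely, if $V$ is abelian then $C/R$ is an elementary abelian chief-block and one invokes that a complemented chief factor of a Frattini-free group can be refined to an actual normal complement (this is exactly the content of Gasch\"utz's crown theory); if $V$ is nonabelian then $C/R$ is a product of nonabelian simple groups and one lifts the complement using the Schur--Zassenhaus-type argument for the soluble radical, again using $\frat(G)=1$ to kill obstructions. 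Either way one obtains a normal $U\trianglelefteq G$ with $UR=C$ and $U\cap R=1$, hence $C=R\times U$, and $U\neq 1$ because $\delta_G(V)\neq 0$.

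\medskip

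The main obstacle is precisely the equivariance of the splitting: an arbitrary complement of $R$ in $C$ need not be normal in $G$, so one must show the cohomological obstruction to a $G$-invariant complement vanishes, and this is where the hypothesis $\frat(G)=1$ does the essential work (it guarantees $R$ itself is an intersection of complemented maximal subgroups, forcing the relevant first cohomology contribution to split off). Since the statement is cited verbatim from \cite[Lemma 1.3.6]{classes}, in the paper itself no proof is given; the argument above is merely the standard one behind that reference.
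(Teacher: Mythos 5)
You are right that the paper itself offers no proof of Lemma \ref{corona}: it is quoted verbatim from \cite[Lemma 1.3.6]{classes}. So only your sketch can be judged on its own merits, and unfortunately it proves too much. You begin by picking an \emph{arbitrary} $V\in\mathcal V_G$ with $\delta_G(V)\neq 0$ and then claim that the corresponding crown $C_G(V)/R_G(V)$ always admits a normal complement to $R_G(V)$ inside $C_G(V)$. That claim is false, which is exactly why the lemma is phrased existentially (``there exists a crown''). Take $G=\perm(4)$, a soluble group with trivial Frattini subgroup, and let $V$ be the complemented chief factor $\alt(4)/K$ of order $3$, where $K$ is the Klein subgroup. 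Then $C_G(V)=\alt(4)$, $R_G(V)=K$, $\delta_G(V)=1$, and the crown is $\alt(4)/K$; but $G$ has no normal subgroup of order $3$, so there is no normal $U$ with $\alt(4)=K\times U$. (The central crown $G/\alt(4)$ fails for the same reason; the lemma is witnessed here only by the crown $K/1$.) Hence your ``key step'' --- an equivariant splitting of $C$ over $R$ for every crown, justified by a vanishing cohomological obstruction --- cannot be established in general, because the conclusion it aims at is simply not true for every crown.

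A correct proof must therefore first \emph{select} the crown. The standard route (the one behind \cite[Lemma 1.3.6]{classes}) exploits the socle: since $\frat(G)=1$ and $G$ is soluble, $\soc(G)=\fit(G)$ is a complemented, self-centralizing direct product of minimal normal subgroups, and one shows that for a suitable $G$-isomorphism type $V$ of minimal normal subgroup the product $U$ of all minimal normal subgroups $G$-isomorphic to $V$ satisfies $U\cap R_G(V)=1$ and $UR_G(V)=C_G(V)$; the hypothesis $\frat(G)=1$ enters through the complementedness of the minimal normal subgroups, not through a cohomological splitting of an arbitrarily chosen $C$ over $R$. Two further inaccuracies in your sketch: $R_G(V)$ is not ``an intersection of maximal subgroups'' (it is the smallest normal subgroup of $C_G(V)$ with the quotient property recalled in Section \ref{esempio}), and the discussion of nonabelian $V$ is vacuous here, since $G$ is soluble and every chief factor is abelian.
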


\begin{lemma}{\cite[Proposition 11]{crowns}}\label{sotto} Assume that $G$ is a finite soluble group with trivial Frattini subgroup and let $C, R, U$ as in the statement of Lemma \ref{corona}. If $HU=HR=G,$ then $H=G.$
\end{lemma}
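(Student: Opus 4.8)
The plan is to reduce the claim to the single assertion $U\le H$ and then to contradict the minimality built into the definition of $R=R_G(V)$. Throughout I recall that $C=C_G(V)$, $R=R_G(V)$, $C=R\times U$ with $U\cong_G V^{\delta}$ the $V$-crown (the socle of $G/R$), and that $V$ is elementary abelian because $G$ is soluble. First I would apply Dedekind's modular law to the hypotheses $HU=HR=G$, using $U,R\le C\trianglelefteq G$, to get $C=(H\cap C)R=(H\cap C)U$. Since $U$ is abelian and $G=HU$, the subgroup $H\cap U$ is normalized by both $H$ and $U$, hence $H\cap U\trianglelefteq G$; thus it suffices to prove $U\le H$, as then $H=HU=G$. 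Because $U$ is a semisimple $\mathbb{F}_pG$-module and $H\cap U$ is a submodule, I can split $U=(H\cap U)\oplus U_1$ with $U_1\trianglelefteq G$ and $U_1\cong_G V^{f}$. If $H\neq G$ then $U_1\neq 1$, i.e.\ $f\ge 1$, and $G=H\ltimes U_1$ with $H\cap U_1=1$.

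The heart of the argument is to transport $U_1$ into $R$. Put $R_0=H\cap R$, which is normal in $G$ for the same reason as $H\cap U$ (it is centralized by $U$ and normalized by $H$). Consider $RU_1=R\times U_1\trianglelefteq G$ and $L_1=RU_1\cap H$. By Dedekind, $L_1U_1=RU_1$ and $L_1\cap U_1=1$, so $L_1$ is a complement to $U_1$ in $RU_1$; since $U_1$ is central in $RU_1$, this complement is the graph $L_1=\{r\,\phi(r):r\in R\}$ of a homomorphism $\phi\colon R\to U_1$ with $\ker\phi=R_0$. Comparing orders, $|R:R_0|=|G:H|=|U_1|$ (the first equality from $HR=G$ with $R\trianglelefteq G$, the second from $G=H\ltimes U_1$), so $\phi$ is onto and $R/R_0\cong U_1$. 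The decisive point is that $L_1\trianglelefteq G$: writing an arbitrary $g\in G=HU_1$ as $g=hu_1$, one has $L_1^{g}=L_1^{u_1}=L_1$, because $u_1$ centralizes $RU_1\supseteq L_1$. Normality of $L_1$ makes $\phi$ a $G$-map, so in fact $R/R_0\cong_G U_1\cong_G V^{f}$.

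It remains to contradict the minimality of $R$. Since $f\ge 1$, we have $R_0\lneq R$, with $R_0\trianglelefteq G$ and $R_0\le C$. The direct decomposition $C=R\times U$ descends to $C/R_0=(R/R_0)\times(UR_0/R_0)$; as both factors are $V$-isotypic, $C/R_0\cong_G V^{f+\delta}$ is a direct product of copies of $V$. To see that $C/R_0$ is complemented in $G/R_0$, I would first produce the crown complement: by definition $C/R$ has a complement $K/R$ in $G/R$, and combined with $C=R\times U$ this yields $G=K\ltimes U$ with $R\le K$. Then I claim $M:=H\cap K$ does the job. Indeed Dedekind (with $HR=G$ and $R\le K$) gives $(H\cap K)R=K$, while $(H\cap K)\cap R=H\cap R=R_0$; hence $(H\cap K)/R_0$ complements $R/R_0$ in $K/R_0$, and since $K/R_0$ complements $UR_0/R_0$ in $G/R_0$, the subgroup $(H\cap K)/R_0$ complements $C/R_0$ in $G/R_0$. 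Thus $R_0$ is a normal subgroup of $G$, properly contained in $R$ and contained in $C$, with $C/R_0$ $G$-isomorphic to a direct product of copies of $V$ and complemented in $G/R_0$ --- contradicting the fact that $R=R_G(V)$ is the \emph{smallest} such subgroup. Therefore $f=0$, $U\le H$, and $H=G$.

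I expect the main obstacle to be the module isomorphism $R/R_0\cong_G V^{f}$: the content is not the order count but the fact that the natural bijection is genuinely $G$-equivariant and identifies $R/R_0$ with a full $V$-isotypic section, and this rests entirely on the normality $L_1\trianglelefteq G$, which quietly uses $HU=G$ (to write $G=HU_1$) together with the centrality of $U_1$ in $RU_1$. A secondary difficulty is that one must complement the enlarged factor $C/R_0$ rather than the original crown $C/R$; this is exactly where $HR=G$ re-enters, through the identity $(H\cap K)R=K$, to trim the crown complement $K$ down to a complement of $C/R_0$.
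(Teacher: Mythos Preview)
The paper does not give its own proof of this lemma; it is simply quoted from \cite[Proposition~11]{crowns}. So there is no in-paper argument to compare against.

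Your proof is correct and is essentially the natural one dictated by the definition of $R_G(V)$. Every step checks out: the normality of $H\cap U$ and of $R_0=H\cap R$ follows from $G=HU$ together with $[R,U]=1$ (which comes from $C=R\times U$); the graph description of $L_1=RU_1\cap H$ and its normality in $G$ give the $G$-equivariance of $\phi$, hence $R/R_0\cong_G V^{f}$; and the Dedekind computation $K=(H\cap K)R$ (using $HR=G$ and $R\le K$) yields $(H\cap K)C=G$ with $(H\cap K)\cap C=R_0$, so $C/R_0$ is complemented, contradicting the minimality of $R$. The only place where you are slightly telegraphic is in writing $L_1^{g}=L_1^{u_1}$ for $g=hu_1$: this tacitly uses that $h$ normalizes $L_1=RU_1\cap H$, which is clear since $RU_1\trianglelefteq G$, but is worth one word. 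Your closing remarks correctly identify where each hypothesis $HU=G$ and $HR=G$ is actually consumed.
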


\begin{proof}[Proof of Theorem \ref{diquattro}]
We prove the theorem making induction on the order of $G$. Choose two non-isolated vertices
$x$ and $y$ in the generating graph of $G$. Let $F=\frat(G)$ be the Frattini subgroup of
$G$. Clearly $xF$ and $yF$ are non-isolated vertices of the generating graph of $G/F.$ If
$F\neq 1,$ then by induction there exists a path
$$xF=g_0F,\dots,g_nF=yF$$
in the graph $\Gamma(G/F)$, with $n\leq 3.$ For every $0\leq i \leq n-1,$ we have $G=\langle g_i, g_{i+1}\rangle F
=\langle g_i, g_{i+1}\rangle,$ hence $x=g_0,\dots,g_n=y$ is a path in $\Gamma(G).$ Therefore we may assume $F=1.$ In this case, by Lemma \ref{corona}, there exists a crown $C/R$ of $G$ and
a normal subgroup $U$ of $G$ such that $C=R\times U.$
We have $R=R_G(A)$ where $A$ is an irreducible $G$-module  and $U\cong_G A^\delta$ for $\delta=\delta_G(A).$ 
By induction the graph $\Gamma(G/U)$ contains  a path $xU=g_0U,g_1U,\dots,g_{n-1}U,g_nU=yU$
with $n\leq 3.$ We may assume $n=3:$ indeed if $n=1$ we may consider the path $g_0U, g_1U, g_0U, g_1U$ and if $n=2$ we  may consider the path
$g_0U, g_0g_1U,  g_1U, g_2U.$ So we are assuming
\begin{equation}\label{modu}
\langle x ,g_1 \rangle U=\langle g_1,g_2 \rangle U=\langle g_2,y \rangle U=G.
\end{equation}
We work in the factor group $\bar G=G/R.$ We have $\bar C=C/R=UR/R\cong U\cong A^\delta$
and either $A\cong C_p$ is a trivial $G$-module and $\bar G\cong (C_p)^\delta$  or $\bar G= \bar U \rtimes \bar H \cong A^\delta \rtimes K$
where $K \cong \bar H$ acts in the same say on each of the $\delta$ factors of $A^\delta$ and this action
is faithful and irreducible. Since $\bar G$ is 2-generated, we have $\delta\leq 2$ if $A$ is a trivial $G$-module,  $\delta\leq n:=\dim_{{\End_G(A)}}A$
otherwise. By Theorem \ref{main2} in the first case (we are working in the nilpotent group $A^\delta$) and by Proposition \ref{corfour} in the second case, there exist $u_1,u_2\in U$ with
$\langle \bar x, \bar g_1\bar u_1\rangle=
\langle \bar g_1\bar u_1, \bar g_2\bar u_2\rangle= \langle \bar g_2\bar u_2, \bar y\rangle=\bar G.$ i.e.
\begin{equation}\label{modr}
\langle x,g_1u_1 \rangle R=\langle g_1u_1,g_2u_2 \rangle R=\langle g_2u_2,y \rangle R=G.
\end{equation}
By Lemma \ref{sotto}, from (\ref{modu}) and (\ref{modr}), we deduce $$\langle x, g_1u_1\rangle=\langle g_1u_1, g_2u_2\rangle=\langle g_2u_2, y\rangle=G.\qedhere $$
\end{proof}

\

	\noindent Andrea Lucchini\\ Universit\`a degli Studi di Padova\\  Dipartimento di Matematica \lq\lq Tullio Levi-Civita\rq\rq\\ Via Trieste 63, 35121 Padova, Italy\\email: lucchini@math.unipd.it


\begin{thebibliography}{99}                                          
	
	
                 
	

\bibitem{classes}
A. Ballester-Bolinches and L.~M. Ezquerro, {Classes of finite
	groups}, Mathematics and Its Applications (Springer), vol. 584, Springer,
Dordrecht, 2006.


\bibitem{CL2} E. Crestani and A. Lucchini, The non-isolated vertices in the generating graph of a direct power of simple groups,
{J. Algebraic Combin.},  37 (2013), no. 2, 249--263. 

	
\bibitem{CL4} E. Crestani and A. Lucchini, $d$-Wise generation of prosolvable groups,
{J. Algebra}, {369} (2012), no. 2, 59--69.	

\bibitem{CLis} E. Crestani and A. Lucchini, The generating graph of finite soluble groups. Israel J. Math. 198 (2013), no. 1, 63--74. 

\bibitem{bias} E. Crestani and A. Lucchini, Bias of group generators in the solvable case. Israel J. Math. 207 (2015), no. 2, 739--761. 

\bibitem{crowns} E. Detomi and A. Lucchini, Crowns and factorization of
the probabilistic zeta function of a finite group, {J.
	Algebra}, {265} (2003), no. 2, 651--668.
 
\bibitem{Ga} W. Gasch\"utz, Zu einem von B. H. und H. Neumann gestellten Problem, { Math. Nachr.}
{14} (1955), 249-252. 
 
	
\bibitem{Ga2} W. Gasch\"utz, Praefrattinigruppen, {Arch. Mat.}
{13} (1962) 418--426.	


\bibitem{xdir} A. Lucchini,
The $X$-Dirichlet polynomial of a finite group,
J. Group Theory 8 (2005), no. 2, 171--188.

\bibitem{LM2} A. Lucchini and A. Mar\'oti, On the clique number of
the generating graph of a finite group, {Proc. Amer. Math.
	Soc.} {137}, No. 10, (2009), 3207--3217.

\bibitem{dz} D. Zelinsky, Every linear transformation is a sum of nonsingular ones, Proc. Amer. Math. Soc. 5 (1954), 627–-630.



\end{thebibliography}
\end{document}